\title{Packing, Counting and Covering  Hamilton cycles in random directed graphs}
\author{ Asaf Ferber
\thanks{Department of Mathematics, Yale University, and Department of Mathematics, MIT. Emails:
asaf.ferber@yale.edu, and ferbera@mit.edu.}\and Gal
Kronenberg\thanks{School of Mathematical Sciences, Raymond and
Beverly Sackler Faculty of Exact Sciences, Tel Aviv University, Tel
Aviv, 6997801, Israel. Email: galkrone@mail.tau.ac.il.} \and Eoin
Long\thanks{School of Mathematical Sciences, Raymond and Beverly
Sackler Faculty of Exact Sciences, Tel Aviv University, Tel Aviv,
6997801, Israel. Email: eoinlong@post.tau.ac.il.}}
\date{\today}
\theoremstyle{plain}
\newtheorem{theorem}{Theorem}[section]
\newtheorem{lemma}[theorem]{Lemma}
\newtheorem{claim}[theorem]{Claim}
\newtheorem{remark}[theorem]{Remark}
\newtheorem{definition}[theorem]{Definition}
\newcommand{\Bin}{\ensuremath{\textrm{Bin}}}
\newcommand{\whp}{w.h.p.\ }
\newcommand{\D}{\mathcal D}
\newcommand{\V}{\mathcal V}
\newcommand{\M}{\mathcal M}
\definecolor{RED}{rgb}{1,0,0}\definecolor{BLUE}{rgb}{0,0,1} 
\providecommand{\DIFaddbegin}{} 
\providecommand{\DIFdelbegin}{} 
\providecommand{\DIFdelend}{} 
\begin{document}
\maketitle

\begin{abstract}
A Hamilton cycle in a digraph is a cycle that passes through all the
vertices, where all the arcs are oriented in the same direction. The
problem of finding Hamilton cycles in directed graphs is well
studied and is known to be hard. One of the main reasons for this,
is that there is no general tool for finding Hamilton cycles in
directed graphs comparable to the so called Pos\'a `rotation-extension'
technique for the undirected analogue. Let ${\mathcal D}(n,p)$ denote the
random digraph on vertex set $[n]$, obtained by adding each directed
edge independently with probability $p$. Here we
present a general and a very simple method, using known results, to attack problems of packing and counting Hamilton cycles in random directed
graphs, for every edge-probability $p>\log^C(n)/n$. Our results are
asymptotically optimal with respect to all parameters and apply equally
well to the undirected case.

\end{abstract}

%
%
%
%
%
%
%
%
%
%
%
%
%
%
%
%
%
%
%
%

\section{Introduction}

A \emph{Hamilton cycle} in a graph or a directed graph is a cycle
passing through every vertex of the graph exactly once, and a graph
is \emph{Hamiltonian} if it contains a Hamilton cycle. Hamiltonicity
is one of the most central notions in graph theory, and has been
intensively studied by numerous researchers in the last couple of
decades.

The decision problem of whether a given graph contains a Hamilton
cycle is known to be $\mathcal{NP}$-hard and is one of Karp's list
of 21 $\mathcal{NP}$-hard problems \cite{karp1972reducibility}.
Therefore, it is important to find general sufficient conditions for
Hamiltonicity and indeed, many interesting results were obtained in
this direction.

Once Hamiltonicity has been established for a graph
there are many questions of further interest. For example, the following are natural \DIFdelbegin  questions:
\begin{itemize}
    \item Let $G$ be a graph with minimum degree $\delta(G)$. Is it possible to find roughly $\delta(G)/2$ edge-disjoint Hamilton
    cycles? (This problem is referred to as the  \emph{packing} problem.)
     \item Let $\Delta(G)$ denote the maximum degree of $G$. Is it
    possible to find roughly $\Delta(G)/2$ Hamilton cycles for which
    every edge $e\in E(G)$ appears in at least one of these cycles? (This
     problem is referred to as the  \emph{covering} problem.)
     \item How many distinct Hamilton cycles does a given graph have?
    (This problem is referred to as the \emph{counting} problem.)
    \end{itemize}

All of the above questions have a long history and many results are known.
Let us define
$\mathcal G(n,p)$ to be the probability space of graphs on a vertex
set $[n]:=\{1,\ldots,n\}$, such that each possible (unordered) pair
$xy$ of elements of $[n]$ appears as an edge independently
with probability $p$. We say that a graph $G\sim \mathcal G(n,p)$
satisfies a property $\mathcal P$ of graphs with high
probability (w.h.p.) if the probability that $G$ satisfies
$\mathcal P$ tends to $1$ as $n$ tends to infinity.\\

{\bf Packing.} The question of packing in the probabilistic setting
was firstly discussed by Bollob\'as and Frieze in the 80's. They
showed in \cite{bollobas1985matchings} that if $\{G_i\}_{i=0}^{\binom {n}{2}}$ is a random graph process on $[n]$, where $G_0$ is the empty graph and $G_i$ is obtained from $G_{i-1}$ by adjoining a non-edge of $G_{i-1}$ uniformly at random, as soon as $G_i$ has minimum degree $k$ (where $k$
is a fixed integer), it has $\lfloor k/2\rfloor$ edge-disjoint
Hamilton cycles plus a disjoint perfect matching if $k$ is odd.
This
result generalizes an earlier result of Bollob\'as
\mbox{
\cite{bollobas1984evolution}
}
who proved (among other things) that
for $p=\frac{\ln n+\ln\ln n+\omega(1)}{n}$, a typical graph $G\sim
\mathcal G(n,p)$ is Hamiltonian. Note that this value of $p$ is optimal in
the sense that for $p=\frac{\ln n+\ln\ln n-\omega(1)}{n}$, it is
known that  w.h.p.
a graph $G\sim \mathcal G(n,p)$ satisfies
$\delta(G)\leq 1$, and therefore is not Hamiltonian. Later on,
Frieze and Krivelevich showed in \mbox{
\cite{frieze2008two}
}
that for
$p=(1+o(1))\frac{\ln n}n$, a graph $G\sim \mathcal G(n,p)$
w.h.p. contains $\lfloor\delta(G)/2\rfloor$ edge-disjoint Hamilton
cycles (in fact, this was proven only using pseudo-random hypothesis), which has afterwards been improved by Ben-Shimon,
Krivelevich and Sudakov in \cite{ben2011resilience} to $p\leq
1.02\frac{\ln n}n$. We remark that in this regime of $p$, w.h.p.
$G\sim \mathcal G(n,p)$ is quite far from being regular.
 As the culmination of a long line of research Knox,  K\"uhn and Osthus
\cite{knox2013edge}, Krivelevich and Samotij \mbox{
\cite{krivelevich2012optimal} }
and  K\"uhn and Osthus \cite{kuhn2014hamilton} completely solved
this question for the entire range of $p$.

For the non-random case, it is worth mentioning a recent remarkable
result due to Csaba, K\"uhn, Lo, Osthus and Treglown
\cite{csaba2013proof}
which
proved that for large enough $n$ and $d\geq \lfloor n/2\rfloor$,
every $d$-regular graph on $n$ vertices contains $\lfloor
d/2\rfloor$ edge-disjoint Hamilton cycles and one disjoint perfect
matching in case $d$ is odd. This result settles a long standing
problem due to Nash-Williams \mbox{
\cite{nash1970hamiltonian}
}
for large
graphs.
\\

{\bf Covering.} The problem of covering the edges of a random graph
was firstly studied in \cite{glebov2014covering} by Glebov,
Krivelevich and Szab\'o. It is shown that for $p\geq
n^{-1+\varepsilon}$, the edges of a typical $G\sim \mathcal G(n,p)$
can be covered by $(1+o(1))np/2$ edge-disjoint Hamilton cycles. Furthermore they proved  analogous results also in the pseudo-random setting. In
\cite{hefetz2014optimal}, Hefetz, Lapinskas, K\"uhn and Osthus
improved it by showing that for some $C>0$ and $\frac{\log ^C(n)}{n}
\leq p \leq 1-n^{-1/8}$, one can cover all the edges of a typical
graph $G\sim \mathcal G(n,p)$ with $\lceil\Delta(G)/2\rceil$
Hamilton cycles.
\\

\DIFdelend {\bf Counting.} Given a graph $G$, let $h(G)$ denote the number of
distinct Hamilton cycles in $G$. Strengthening the classical theorem
of Dirac from the 50's \cite{dirac1952some}, S\'ark\"ozy, Selkow and
Szemer\'edi \cite{sarkozy2003number} proved that every graph $G$ on
$n$ vertices with minimum degree at least $n/2$ contains not only
one but at least $c^nn!$ Hamilton cycles for some small positive
constant $c$. They also conjectured that this  $c$ could be improved to
$1/2-o(1)$. This was later  proven by Cuckler and Kahn
\cite{cuckler2009hamiltonian}.
In fact, Cuckler and Kahn proved a
stronger result: every graph $G$ on $n$ vertices with minimum degree
$\delta(G)\geq n/2$ has $h(G)\geq
\left(\frac{\delta(G)}{e}\right)^{n}(1-o(1))^{n}$. A typical random
graph $G\sim \mathcal G(n,p)$ with $p>1/2$ shows that this estimate
is sharp (up to the $(1-o(1))^n$ factor). Indeed, in this case with
high probability $\delta(G)=pn+o(n)$ and the expected number of
Hamilton cycles is $p^n(n-1)!<(pn/e)^n$.

In the  random/pseudo-random setting,
building on ideas of Krivelevich \cite{krivelevich2012number}, in
\cite{glebov2013number} Glebov and Krivelevich showed that
for $p\geq \frac{\ln n+\ln\ln n+\omega(1)}n$ and for a typical
$G\sim \mathcal G(n,p)$ we have $h(G)=(1-o(1))^nn!p^n$. That is, the
number of Hamilton cycles is, up to a sub-exponential factor,
concentrated around its mean. For larger values of $p$, Janson
showed
\mbox{
    \cite{janson1994numbers}
}
that the distribution of $h(G)$ is log-normal, for $G\sim \mathcal
G(n,p)$ with $p=\omega(n^{-1/2})$.\\

In this paper we treat the three of these problems in the random directed setting.
A \emph{directed graph} (or \emph{digraph}) is a pair $D=(V,E)$ with a set of \emph{vertices} $V$ and a
set of \emph{arcs} $E$, where each arc is an ordered pair of
elements of $V$. A directed graph is called \textit{oriented}, if for every pair of vertices $u,v\in V$, at most one of the directed edges $\overrightarrow{uv}$ or $\overrightarrow{vu}$ appears in the graph. A \textit{tournament} is an oriented complete graph. A \emph{Hamilton cycle} in a digraph  is a cycle going through all the vertices exactly once, where all the arcs are oriented in the same direction in a cyclic order. Given a directed graph $D$ and a vertex $v\in V$, we let $d_D^+(v)$ and $d^-_D(v)$ denote its out- and in- degree in
$D$.

Let $\mathcal D(n,p)$ be the
probability space consisting of all directed graphs on vertex set
$[n]$ in which each possible arc is added with probability
$p$ independently at random. The problem of determining the range
of values of $p$ for which a typical graph $D\sim \mathcal D(n,p)$ is
Hamiltonian goes back to
the early 80's, where McDiarmid \cite{mcdiarmid1980clutter} showed,
among other things, that an elegant coupling argument gives the inequality
$$\Pr[G\sim \mathcal G(n,p) \textrm{ is
    Hamiltonian}]\leq \Pr[D\sim \mathcal D(n,p) \textrm{ is
    Hamiltonian}].$$ Combined with the result of Bollob\'as
\cite{bollobas1984evolution} it follows that a typical $D\sim
\mathcal D(n,p)$ is Hamiltonian for $p\geq \frac{\ln n+\ln\ln
    n+\omega(1)}n$. Later on, Frieze showed in
\cite{frieze1988algorithm} that the same conclusion holds for $p\geq
\frac{\ln n+\omega(1)}n$. The result of Frieze is optimal in the
sense that for $p=\frac{\ln n-\omega(1)}n$, it is not difficult to
see that for a typical $D\sim \mathcal D(n,p)$ we have
$\min _{v\in V} \{\delta ^+(v), \delta ^-(v)\} =0$ and therefore $D$ is not Hamiltonian. Robustness
of Hamilton cycles in random digraphs was studied by Hefetz, Steger
and Sudakov in \cite{robustlyHamDigraphs} and by Ferber, Nenadov,
Noever, Peter and Skori\'c in \cite{ferberrobust}.

\subsection{Our results}

While
in general/random/pseudo-random graphs there are many known
results, much less is known about the problems of counting, packing
and covering in the directed setting. The main difficulty is that in
this setting the so called Pos\'a rotation-extension technique
(see \cite{posa}) does not work in its simplest form.

In this paper we present a simple method to attack and
approximately solve all the above mentioned problems in
random/pseudo-random directed graphs, with an optimal (up to a
$polylog(n)$ factor) density. Our method is also applicable in the
undirected setting, and therefore reproves many of the above
mentioned results in a simpler way.

    The problem of packing Hamilton cycles in digraphs goes back to the 70's.
    Tilson \cite{tillson1980hamiltonian} showed that every complete digraph has a Hamilton decomposition. Recently, a remarkable result of  K{\"u}hn and Osthus  (see
\cite{kuhn2013hamilton})  proves  that  for any regular orientation of a sufficiently dense graph one can find a Hamilton decomposition.
    In the case of a random directed graph, not much is known regarding packing Hamilton cycles. Our first result proves the  existence of $(1-o(1))np$
    edge-disjoint Hamilton cycles \DIFaddbegin in $\D(n,p)$.
    \begin{theorem}
        \label{thm:PackingRandom}
        For $p= \omega \left(\frac {\log^4n}{n}\right)$, w.h.p.\ the digraph
        $D\sim \mathcal D(n,p)$ has $(1-o(1))np$ edge-disjoint Hamilton cycles.
    \end{theorem}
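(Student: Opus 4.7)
The approach reduces the problem, via a bipartite encoding of digraphs, to packing perfect matchings in a random bipartite graph, and then uses a small reserve of random edges to stitch the resulting $1$-factors into Hamilton cycles. To each digraph $D$ on $[n]$ associate the bipartite graph $B(D)$ with parts $A=\{a_1,\ldots,a_n\}$ and $B=\{b_1,\ldots,b_n\}$, where $a_ib_j\in E(B(D))$ iff $\overrightarrow{ij}\in E(D)$. Perfect matchings of $B(D)$ are in bijection with $1$-factors of $D$ (spanning collections of vertex-disjoint directed cycles), and $B(D)\sim \mathcal{G}(n,n,p)$ when $D\sim \D(n,p)$. Choose $p_1,p_2$ with $1-p=(1-p_1)(1-p_2)$ and $p_1=p(1-1/\log n)$, so that $D\sim \D(n,p)$ is the disjoint union of independent $D_i\sim \D(n,p_i)$ and $p_2=\omega(\log^3 n/n)$.

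In the first phase, I apply a known packing theorem for perfect matchings in random bipartite graphs---the bipartite analogue of the results in \cite{knox2013edge,krivelevich2012optimal,kuhn2014hamilton}---to $B(D_1)$ to obtain, w.h.p., a family of $M=(1-o(1))np_1=(1-o(1))np$ edge-disjoint perfect matchings. These correspond to edge-disjoint $1$-factors $F_1,\ldots,F_M$ of $D_1$.

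In the second phase, for each $F_i$ I iteratively merge pairs of its cycles into a single Hamilton cycle $H_i$ using edges of $D_2$: to merge two cycles $C,C'$ of the current $1$-factor, find $\overrightarrow{uv}\in C$ and $\overrightarrow{u'v'}\in C'$ with $\overrightarrow{uv'},\overrightarrow{u'v}\in D_2$; deleting $\overrightarrow{uv},\overrightarrow{u'v'}$ and adding $\overrightarrow{uv'},\overrightarrow{u'v}$ combines $C$ and $C'$ into a single cycle on $V(C)\cup V(C')$. Iterating $k_i-1$ times, where $k_i$ denotes the initial number of cycles of $F_i$, yields $H_i$ and consumes $2(k_i-1)$ edges of $D_2$.

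The main obstacle is two-fold: (i) when a cycle $C$ of $F_i$ is very short, the expected number of usable merging pairs $p_2^2|C||C'|$ may be $o(1)$, so the naive merging fails; and (ii) the $D_2$-edges used across the $M$ merging sequences must be pairwise distinct, to keep $H_1,\ldots,H_M$ edge-disjoint. Issue (i) is handled by a preprocessing step: via local swaps in $B(D_1)$, combined with a thin additional sprinkling layer of leftover edges, each $F_i$ is rewired so that every cycle has length at least $n/\log^{c}n$ for a suitable constant $c$, after which the expected number of mergeable pairs between any two cycles in any $F_i$ is a large polylogarithmic factor. Issue (ii) then follows from a greedy argument together with a Chernoff-based expansion estimate for $D_2$: the total number of $D_2$-edges consumed across all merges is $O(np\log n)$, far smaller than $|E(D_2)|=\Theta(n^2 p_2)=\Theta(n^2 p/\log n)$ and than the number of mergeable pairs available between any two large cycles. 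Putting these together yields the desired $(1-o(1))np$ edge-disjoint Hamilton cycles.
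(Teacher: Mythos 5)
Your proposal takes a genuinely different route from the paper. The paper never builds $1$-factors; it splits $[n]$ into an $(\ell,s)$-partition, packs perfect matchings \emph{between consecutive parts} (so the resulting ``matching path systems'' are long directed paths of length $\ell-1$ by construction), contracts each path to a vertex, and invokes Hamiltonicity of the resulting $\mathcal D(s+m,p_{ex})$ to close the paths into a Hamilton cycle. Your proposal instead uses the bipartite encoding $B(D)$, packs perfect matchings there to get edge-disjoint $1$-factors of $D$, and then proposes to merge the cycles of each $1$-factor using a reserve $D_2$.

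There is a genuine gap in the merging step, and it is precisely the obstacle the paper's construction is designed to avoid. A perfect matching of $B(D_1)$, viewed as a permutation, will typically contain cycles of constant length (and in fact $\Theta(\log n)$ cycles spread across all scales). For a cycle $C$ of length $O(1)$, the expected number of mergeable pairs with \emph{any} other cycle $C'$ is $|C||C'|p_2^2 = O(n\,p_2^2) = O\!\left(\text{polylog}(n)/n\right) = o(1)$, since $p_2=\Theta(p/\log n)$ and $p=\omega(\log^4 n/n)$ gives $p_2^2=\Theta(\text{polylog}(n)/n^2)$. So short cycles cannot be merged directly, and they are unavoidable. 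Your proposed fix --- ``local swaps in $B(D_1)$ combined with a thin additional sprinkling layer'' --- is not an argument: the sprinkling layer would be even sparser than $D_2$, so it faces the same $o(1)$ count; and the local swaps must be performed simultaneously across $(1-o(1))np$ edge-disjoint $1$-factors while preserving both edge-disjointness and the $1$-factor property, which is a substantial combinatorial problem you have not addressed (only $o(np)$ out-edges per vertex remain unassigned after the packing phase). Separately, your ``pairwise distinct $D_2$-edges'' greedy argument also needs more care, since what matters is not the global edge budget but whether specific cycle-pairs retain a usable merge after earlier $1$-factors have consumed edges, and after the long-cycle preprocessing the per-pair merge count is only polylogarithmic. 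Finally, a minor point: the packing-of-perfect-matchings input you need is not the bipartite analogue of the $\mathcal G(n,p)$ Hamilton-cycle packing results you cite, but a Gale--Ryser-type statement (Lemma~\ref{lemma:many-matchings} in this paper), which is much more elementary. The paper's $(\ell,s)$-partition approach is exactly a device to guarantee long paths ``for free'' and thereby sidestep the short-cycle issue entirely; your proposal needs a comparably robust mechanism before it can be considered a proof.
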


We also show that in random directed graphs one can cover all the edges by not too many cycles.

    \begin{theorem}
    	\label{thm:CoverRandom}
        Let $p=\omega\left(\frac{\log^2n}n\right)$. Then, a digraph $D\sim
        \mathcal D(n,p)$ w.h.p.\  can be covered  with $(1+o(1))np$ directed
        Hamilton cycles.
    \end{theorem}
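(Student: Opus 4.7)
The plan is to reduce the covering problem to covering many directed near-matchings, each of which is then extended to a Hamilton cycle of $D$. Concretely, I would partition $E(D)$ into $K=(1+o(1))np$ directed near-matchings $M_1,\ldots,M_K$, and for every $i$ find a Hamilton cycle $C_i\subseteq D$ with $M_i\subseteq C_i$. Since each arc of $D$ already lies in some $M_i\subseteq C_i$, the family $\{C_1,\ldots,C_K\}$ automatically covers $E(D)$.

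For the partition step, view $D$ as a bipartite graph $B$ on $V^+\cup V^-$ (two copies of $V(D)$), with an edge $v^+u^-$ for every arc $\overrightarrow{vu}\in E(D)$. A standard Chernoff plus union bound shows that $\Delta(B)=\max_{v}\max(d_D^+(v),d_D^-(v))=(1+o(1))np$ \whp. By K\"onig's edge-colouring theorem, $E(B)$ admits a proper edge-colouring using $\Delta(B)$ colours; translating back to $D$, this partitions $E(D)$ into $K=(1+o(1))np$ directed near-matchings, each of which is a vertex-disjoint union of directed paths and short cycles.

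For the extension step I would sprinkle a small random reservoir: write $p=p_1+p_2-p_1p_2$ with $p_2=o(p)$ but $p_2\gg \log n/n$, so that $D\sim\D(n,p)$ is distributed as $D_1\cup D_2$ with independent $D_j\sim\D(n,p_j)$. Apply the bipartite colouring above to $D_1$ to obtain near-matchings $M_1,\ldots,M_K$. A robustness/extension property of $D_2$ (of the flavour established in \cite{robustlyHamDigraphs,ferberrobust}, and underpinning the general method used for Theorem \ref{thm:PackingRandom}) would ensure that each $M_i$ can be extended to a Hamilton cycle of $D=D_1\cup D_2$ by stitching its constituent paths and cycles together through arcs of $D_2$. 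Any arcs of $D_2$ that remain uncovered by $\{C_1,\ldots,C_K\}$ form a sub-digraph of maximum degree $o(np)$ \whp, which can then be absorbed into $o(np)$ additional Hamilton cycles by repeating the same scheme, keeping the total count at $(1+o(1))np$.

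The main obstacle is guaranteeing this extension step simultaneously for all $K=(1+o(1))np$ near-matchings: $D_2$ has to be dense enough to patch every $M_i$ into a Hamilton cycle, yet sparse enough that the residual cleanup produces only $o(np)$ extra cycles. Striking this balance at density $p=\omega(\log^2 n/n)$ (rather than the larger $p=\omega(\log^4 n/n)$ required by Theorem \ref{thm:PackingRandom}) crucially exploits the fact that in the \emph{covering} setting the cycles $C_i$ are allowed to share arcs, so the reservoir $D_2$ may be re-used across the $K$ extensions and its required density drops accordingly.
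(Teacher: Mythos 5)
The decomposition step in your plan has a fatal structural flaw. A König edge-colouring of the bipartite adjacency graph $B$ (on $V^+\cup V^-$) splits $E(D)$ into colour classes each of which is a matching in $B$; translated back to $D$, a colour class is a sub-digraph with maximum out- and in-degree $1$, i.e.\ a vertex-disjoint union of directed paths \emph{and directed cycles}. For most colours the class will in fact be close to a perfect matching of $B$, hence close to a permutation digraph of $D$, which is a disjoint union of cycles. But a directed Hamilton cycle of $D$ cannot contain a proper directed cycle $C$ with $2\leq |C|<n$ as a subgraph: the out- and in-edges of the vertices of $C$ are forced, so $C$ would be a connected component of $C_i$, contradicting Hamiltonicity. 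Hence for any colour class $M_i$ containing such a cycle there is no Hamilton cycle $C_i\supseteq M_i$ at all, regardless of how rich the reservoir $D_2$ is, and your "stitching" step cannot be carried out without deleting edges of $M_i$ — in which case those edges are no longer guaranteed to be covered. This is not a technicality: it is precisely the obstacle the paper's layered construction is designed to avoid. There, the decomposition is built from perfect matchings between consecutive blocks $V_j$ and $V_{j+1}$ of an $(\ell,s)$-partition, so that combining one matching from each layer always yields a system of vertex-disjoint directed $V_1\!\to V_\ell$ \emph{paths}, never a cycle; these paths are then contracted and closed up using a disjoint reservoir of exterior edges, via the Hamiltonicity of $\mathcal D(s+m,p_{ex})$. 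To make the colour classes cover every interior edge, the paper proves the max-flow Lemma~\ref{lemma: completing arbitrary small graph into an r-factor}, which completes the leftover bipartite graph to a low-degree $r$-regular graph that can then be decomposed into additional perfect matchings by Hall's theorem — this plays the role your König step was meant to play, but within the path-only structure.

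A secondary concern, even if you could enforce that every $M_i$ is a union of paths, is that you would need a robustness statement of the form "w.h.p.\ $D_2$ simultaneously extends \emph{every} spanning linear forest to a Hamilton cycle of $D_1\cup D_2$," uniformly over the roughly $(np)$ forests produced. The cited robustness results do not directly give this for arbitrary linear forests; the paper instead reduces to the known Hamiltonicity threshold for $\mathcal D(s+m,p_{ex})$ after contracting a structured path system, which is much easier to control. So the overall route is genuinely different from the paper's and, as stated, does not go through.
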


    The problem of counting Hamilton cycles in digraphs was already studied in the early 70's by Wright in \cite{wright1973many}.
    However, counting Hamilton cycles in tournaments is an even older problem which goes back to one of the first applications of the probabilistic method by Szele
\cite{szele1943kombinatorikai}. He proved that there
    are tournaments on n vertices with at least $(n-1)!/2^n$
    Hamilton cycles.
    Thomassen
\cite{thomassen1985hamilton}
conjectured that in fact every regular tournament contains at least
    $n^{(1-o(1))n}$ Hamilton cycles.
    This conjecture  was solved by Cuckler
\cite{cuckler2007hamiltonian} who proved that every regular
tournament on $n$ vertices contains at least $\frac
{n!}{(2+o(1))^n}$ Hamilton cycles.  Ferber,  Krivelevich
and  Sudakov \cite{ferber2012counting} later extended Cuckler's result for
every nearly $cn$-regular oriented graph for $c>3/8$.  Here, we
count the number of Hamilton cycles in random directed graphs and
improve a  result of Frieze and Suen from \cite{frieze1992counting}.
    We show that the number of directed Hamilton cycles in such
    random graphs is concentrated (up to a
    sub-exponential factor) around its mean.

    \begin{theorem}
        \label{thm:CountingRandom}
        Let $p=\omega\left(\frac{\log^2n}n\right)$. Then, a digraph $D\sim
        \mathcal D(n,p)$ w.h.p.\ contains $(1\pm o(1))^nn!p^n$
        directed Hamilton cycles.
    \end{theorem}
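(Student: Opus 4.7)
For the upper bound, since $\mathbb{E}[h(D)] = (n-1)!\,p^n$, Markov's inequality gives $h(D) \leq n^2 \cdot (n-1)!\,p^n = n \cdot n!\,p^n \leq (1+o(1))^n n!\,p^n$ with probability $1 - n^{-2}$, using that any polynomial factor is absorbed by $(1+o(1))^n$.

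For the matching lower bound $h(D) \geq (1-o(1))^n n!\,p^n$, my plan is to pass through the permanent $\mathrm{per}(A_D)$ of the adjacency matrix of $D$, which counts the $1$-factors of $D$: spanning sub-digraphs with in- and out-degree one at every vertex, equivalently, derangements $\sigma$ with $(i,\sigma(i)) \in E(D)$ for all $i$. Hamilton cycles are exactly the $1$-factors whose cycle decomposition is a single $n$-cycle. Since $1/n = (1+o(1))^n$, it suffices to prove (A) $\mathrm{per}(A_D) \geq (1-o(1))^n n!\,p^n$, and (B) the fraction of $1$-factors that are Hamilton is at least $(1-o(1))^n/n$.

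For (A), Chernoff's inequality shows w.h.p.\ that all in- and out-degrees of $D$ lie in $(1 \pm o(1))np$, so $A_D$ is almost $np$-regular and can be scaled (in Sinkhorn fashion) to an almost-doubly-stochastic matrix $M$ with $\mathrm{per}(A_D) = (1\pm o(1))^n (np)^n \mathrm{per}(M)$. The Van der Waerden / Egorychev--Falikman bound $\mathrm{per}(M) \geq n!/n^n$ then yields $\mathrm{per}(A_D) \geq (1-o(1))^n n!\,p^n$.

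For (B), I plan a switching argument. Given a $1$-factor $F$ of $D$ with distinct cycles $C_1, C_2$ and arcs $\overrightarrow{xy} \in C_1$, $\overrightarrow{x'y'} \in C_2$, replacing them by $\overrightarrow{xy'}, \overrightarrow{x'y}$ merges $C_1, C_2$ into one cycle, and produces a $1$-factor of $D$ provided $\overrightarrow{xy'}, \overrightarrow{x'y} \in E(D)$. Pseudo-randomness of $\mathcal{D}(n,p)$ for $p = \omega(\log^2 n/n)$ ensures that a $(1 \pm o(1))p^2$ fraction of attempted switchings succeeds, uniformly over $F$. Iterating merges from a $k$-cycle $1$-factor down to a Hamilton $1$-factor, and carefully counting the multiplicities introduced by the reverse (splitting) operation, yields (B); together with (A) this gives the theorem. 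The main obstacle will be (B): making the $p^2$-success rate uniform over the exponentially many $1$-factors and arc pairs requires strong pseudo-random properties of $D$, and one must control the cumulative product of per-step $(1 \pm o(1))$ errors across the $n-1$ merge levels so that it remains $(1 \pm o(1))^n$. This is precisely where the hypothesis $p = \omega(\log^2 n/n)$ should enter.
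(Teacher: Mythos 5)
Your upper bound is essentially the same Markov argument as the paper. Your lower bound, however, is a genuinely different strategy: you pass through $\mathrm{per}(A_D)$ and a switching argument, whereas the paper decomposes $[n]$ into a reservoir $V_0$ and $\ell$ equal layers $V_1,\ldots ,V_\ell$, uses Van der Waerden (Theorem \ref{VanDerWaerden}) on a regular spanning subgraph of each bipartite slice $E_D(V_j,V_{j+1})$ to count matching path systems, and then completes each path system to a Hamilton cycle through the auxiliary digraph $D(\mathcal M,V_0)\sim\mathcal D(s+m,p)$ using Frieze's Hamiltonicity theorem. This layer decomposition is specifically engineered to avoid ever comparing Hamilton cycles to general $1$-factors, which is precisely where your plan runs into trouble.

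Two concrete gaps. First, step (A): the claim that near-$np$-regularity of $A_D$ gives a Sinkhorn scaling $M=D_1A_DD_2$ with $\prod_i (D_1)_{ii}\prod_j(D_2)_{jj}=(1\pm o(1))^n(np)^{-n}$ is not immediate; Sinkhorn stability under $\ell^\infty$ perturbation of row/column sums is a real theorem, not a consequence of Chernoff. This is fixable: instead of scaling, extract a $(1-o(1))np$-regular bipartite spanning subgraph of the bipartite double cover of $D$ (Lemma \ref{lemma:many-matchings} gives edge-disjoint perfect matchings, hence such a subgraph) and apply Theorem \ref{VanDerWaerden} directly, as the paper does layer-by-layer. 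Second, and more seriously, step (B): the forward degree of a $1$-factor $F$ under switching is a sum of $\Theta(n^2)$ indicators, each of mean $p^2$ and each depending on two arcs of $D$ outside $F$; its expectation is $\Theta(n^2p^2)$, which for $p=\omega(\log^2 n/n)$ is only $\omega(\log^4 n)$. A Chernoff-plus-union bound over the up to $n!$ possible $1$-factors needs this mean to be $\Omega(n\log n)$, i.e. $p=\Omega(\sqrt{\log n/n})$, so the claimed ``uniform $(1\pm o(1))p^2$ success rate'' does not hold at the density of the theorem. Averaging over $1$-factors at each cycle level $k$ could in principle replace per-factor concentration, but then one must control the cycle-length profile of a typical $1$-factor of $D$, which is a non-trivial problem in its own right and is exactly what the paper's decomposition sidesteps. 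In short, the route via $1$-factors is the one taken for dense (oriented/regular) graphs by Cuckler and by Ferber--Krivelevich--Sudakov, but at $p$ near $\log^2 n/n$ you should not expect the switching step to close without substantially new ideas.
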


Finally, the same proof method can be used to prove analogous results when instead working with pseudo-random graphs. We direct the reader to Definition \ref{definition:pseudorandom} in Section \ref{sec:HamInPseudo} for the notion of pseudo-randomness used here. The following theorems show that at a cost of an additional $\text{polylog} n$ factor in the density we obtain analogues of Theorem \ref{thm:PackingRandom}, \ref{thm:CoverRandom}, \ref{thm:CountingRandom} for pseudo-random digraphs. Below we will write $o_{\lambda}(1)$ for some quantity tending to $0$ as $\lambda \to 0$.

    \begin{theorem}
        \label{thm:PackingPseudoRandom}
        Let $D$ be a $(n,\lambda,p)$ pseudo-random digraph where
        $p=\omega\left(\frac {\log^{14}n}n\right)$. Then $D$ contains
        $(1-o_{\lambda}(1))np$ edge-disjoint Hamilton cycles.
    \end{theorem}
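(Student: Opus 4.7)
\textbf{Proof proposal for Theorem \ref{thm:PackingPseudoRandom}.} The plan is to adapt the method used for Theorem \ref{thm:PackingRandom} to the pseudo-random setting. The high-level strategy is: (a) split $D$ into a dense bulk $D'$ and a sparse reserve $R$; (b) pack $(1-o_\lambda(1))np$ edge-disjoint $1$-factors in $D'$ via bipartite matching; (c) merge the cycles of each $1$-factor into a Hamilton cycle using arcs from $R$.

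First, choose $\varepsilon = \varepsilon(\lambda) \to 0$ appropriately and randomly partition the arcs of $D$ by placing each arc into $R$ independently with probability $\varepsilon$, and otherwise into $D'$. Chernoff bounds combined with the discrepancy inequality guaranteed by Definition \ref{definition:pseudorandom} show that with positive probability both $D'$ and $R$ inherit $(n, O(\lambda), p')$-pseudo-randomness with appropriately scaled $p'$, and have in- and out-degrees within a $(1 \pm o_\lambda(1))$ factor of their expectations. Now view $D'$ as a balanced bipartite graph $H$ on vertex classes $V^+ \cup V^-$, where $\overrightarrow{uv} \in D'$ corresponds to $u^+v^- \in H$. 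Perfect matchings of $H$ correspond exactly to $1$-factors (spanning unions of vertex-disjoint directed cycles) of $D'$, and the pseudo-randomness of $D'$ translates into a Hall-type expansion that remains valid even after iteratively deleting near-regular matchings. Repeated application of K\"onig's theorem then produces $t = (1 - o_\lambda(1))np$ edge-disjoint $1$-factors $F_1, \ldots, F_t \subseteq D'$.

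Each $F_i$ is a disjoint union of directed cycles; to convert $F_i$ into a Hamilton cycle one removes a few arcs from $F_i$ and inserts replacement arcs (from $R$) stitching the cycles together. I would pick each $F_i$ in sequence as a uniformly random perfect matching of the residual bipartite graph $H \setminus (F_1 \cup \ldots \cup F_{i-1})$, and then greedily select the merging arcs from $R$, maintaining a log of reserve arcs already used. The main obstacle is this simultaneous merging: one must show that the greedy scheme does not get stuck, which requires (i) controlling the number of cycles in each $F_i$ (typically $O(\log n)$ many, by a pseudo-random analogue of the classical random-matching argument), and (ii) a union-bound and concentration argument across all $t$ factors and merging steps to ensure no arc of $R$ is needed twice and that suitable arcs in $R$ always exist. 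The additional $\mathrm{polylog}(n)$ slack in the hypothesis $p = \omega(\log^{14} n / n)$, compared with Theorem \ref{thm:PackingRandom}, is consumed precisely by these pseudo-random error terms: each discrepancy application loses a logarithmic factor over the truly random case, and these accumulate across the matching iterations and the merging bookkeeping.
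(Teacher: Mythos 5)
Your proposal follows the classical ``$1$-factor plus cycle-merging'' template (Bollob\'as--Frieze, Frieze--Krivelevich) rather than the paper's approach. The paper instead builds, inside each of a family of layered $(\ell,s)$-partitions, a \emph{matching path system} -- $m$ vertex-disjoint paths running through the layers $V_1,\ldots,V_\ell$ obtained by concatenating one perfect matching per consecutive pair of layers (Lemma \ref{lemma:bipartite is pseudorandom}) -- and then contracts each path to a single vertex, so that completing the path system to a Hamilton cycle reduces to Hamiltonicity of a small pseudo-random digraph on $V_0\cup\{P_1,\dots,P_m\}$ (Lemmas \ref{lemma:auxiliary2 is pseudorandom} and \ref{lemma: building cycles in (l,s)-partitions for pseudorandom}, feeding into Theorem \ref{thm:pseudorandom is Hamiltonian}). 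This deliberately sidesteps both of the difficulties on which your proposal is silent.

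The two gaps in your proposal are genuine. First, the assertion that a uniformly random perfect matching of the residual pseudo-random bipartite graph $H\setminus(F_1\cup\cdots\cup F_{i-1})$ has $O(\log n)$ cycles ``by a pseudo-random analogue of the classical random-matching argument'' is not a known black box; the classical $\Theta(\log n)$ count is for uniform permutations of $K_{n,n}$, and transferring it to a sparse, iteratively-depleted pseudo-random bipartite graph (with no residual randomness in the host digraph) would itself require a nontrivial argument that you have not supplied. Second, and more fundamentally, the cycle-merging step is exactly where the directed setting bites: for undirected graphs one stitches the components of a $2$-factor together using P\'osa rotations of the reserve graph $R$, but as the paper notes, rotation-extension does not work in its simple form for digraphs, because reversing a subpath destroys the consistent orientation. ``Greedily select the merging arcs from $R$'' is therefore not a step that is known to terminate: you need to show that whenever you break a cycle into a directed path, $R$ contains an arc from its terminal vertex to the head of some other cycle, with control across $\Theta(np)$ factors and with arcs of $R$ used at most once. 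Without a replacement for rotation (or the paper's contraction trick) there is no mechanism that guarantees the greedy process succeeds. This is precisely the missing idea: the paper's whole design -- paths with prescribed endpoints in $V_1$ and $V_\ell$, followed by contraction to an auxiliary digraph $C_k(\mathcal{M}_k,V_0)$ that is itself pseudo-random -- exists to replace the merging step with a single invocation of a Hamiltonicity theorem, and your proposal does not offer an alternative to it.
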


	 \begin{theorem}
    	\label{thm:CoverPseudoRandom}
        Let $D$ be a $(n,\lambda,p)$ pseudo-random digraph where
        $p=\omega\left(\frac {\log^{14}n}n\right)$. Then $D$
        can be covered  with $(1+o_{\lambda}(1))np$ directed Hamilton cycles.
    \end{theorem}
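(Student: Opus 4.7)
The plan is to reduce Theorem \ref{thm:CoverPseudoRandom} to the packing result Theorem \ref{thm:PackingPseudoRandom} via a standard ``pack then patch'' strategy. First I would randomly split the arcs of $D$ into two digraphs $D = D_1 \cup D_2$, placing each arc independently in $D_2$ with probability $\eta$ and in $D_1$ otherwise, where $\eta = \eta(n) = o(1)$ is chosen to decay slowly enough that $\eta p = \omega(\log^{14} n/n)$ still holds (say $\eta = 1/\log n$). A Chernoff argument together with the $(n,\lambda,p)$ pseudo-randomness of $D$ then shows that, with high probability over the split, $D_1$ is $(n,\lambda',(1-\eta)p)$ pseudo-random and $D_2$ is $(n,\lambda',\eta p)$ pseudo-random for some $\lambda'$ only marginally worse than $\lambda$.

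Next I would apply Theorem \ref{thm:PackingPseudoRandom} to $D_1$, obtaining a family $\mathcal F$ of $(1 - o_\lambda(1))(1-\eta)np$ edge-disjoint Hamilton cycles. Let $L$ denote the set of arcs of $D$ not covered by $\mathcal F$; this is exactly the union of the leftover arcs of $D_1$ and all of $D_2$. Since every Hamilton cycle removes exactly one unit from both the in- and out-degrees of each vertex, and pseudo-randomness gives $d_{D_1}^{\pm}(v) = (1 \pm o(1))(1-\eta)np$, one obtains $\Delta^+(L), \Delta^-(L) = o_\lambda(1)\cdot np$. Viewing $L$ as a bipartite multigraph between ``tails'' and ``heads'' copies of $V(D)$, König's edge-colouring theorem then decomposes $L$ into $s = o_\lambda(1)\cdot np$ directed matchings $M_1, \dots, M_s$. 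It therefore suffices to produce, for each $i$, a Hamilton cycle $C_i$ of $D$ containing every arc of $M_i$: the family $\mathcal F \cup \{C_1, \dots, C_s\}$ will then cover every arc of $D$ using at most $(1 + o_\lambda(1))np$ cycles.

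The main obstacle is this last step, extending each matching $M_i$ into a Hamilton cycle of $D$; the plan here is absorption using the pseudo-random reservoir $D_2$. Given $M_i$, form the auxiliary digraph $D_i^*$ obtained from $D_2$ by contracting each arc $\overrightarrow{uv} \in M_i$ into a single new vertex $w_{uv}$ whose out-neighbourhood is $N_{D_2}^+(v)$ and whose in-neighbourhood is $N_{D_2}^-(u)$. Any Hamilton cycle of $D_i^*$ lifts to a Hamilton cycle of $D_2 \cup M_i \subseteq D$ passing through every arc of $M_i$. The technical point to verify is that $D_i^*$ inherits enough pseudo-randomness to be Hamiltonian: since only $2|M_i| \le n$ vertices are identified in pairs, the co-degree and expansion estimates given by the $(n,\lambda',\eta p)$ pseudo-randomness of $D_2$ survive the contraction with only a mild loss in parameters, and $\eta p = \omega(\log^{13} n/n)$ remains comfortably above the Hamiltonicity threshold used in the tools behind Theorem \ref{thm:PackingPseudoRandom}. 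This guarantees each $D_i^*$ is Hamiltonian and finishes the proof.
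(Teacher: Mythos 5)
Your ``pack then patch'' strategy is genuinely different from the paper's intended argument, which (as carried out for $\mathcal D(n,p)$ in Theorem~\ref{thm:CoverRandom} and stated to transfer to the pseudo-random setting) decomposes $D$ into $t$ digraphs $D^{(i)}$ indexed by random $(\ell,s)$-partitions, covers the interior edges of each $D^{(i)}$ by matching path systems using the $r$-regular completion Lemma~\ref{lemma: completing arbitrary small graph into an r-factor}, and completes those into Hamilton cycles with exterior edges. Packing globally in $D_1$ and patching the leftover with a sparse reservoir $D_2$ is a natural alternative design, but the patching step as written does not go through.

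The central gap is the assertion that the contracted digraph $D_i^*$ ``inherits enough pseudo-randomness'' from $D_2$. After contraction, the out-degree of a non-contracted vertex $a$ in $D_i^*$ equals $|N^+_{D_2}(a)|$ minus the number of heads of $M_i$ lying in $N^+_{D_2}(a)$ (arcs $\overrightarrow{av'}$ with $\overrightarrow{u'v'}\in M_i$ are discarded, since $w_{u'v'}$ keeps only $N^-_{D_2}(u')$ as in-neighbourhood), and symmetrically for in-degrees. The matching $M_i$ is determined by the split $D=D_1\cup D_2$ itself via a K\H{o}nig decomposition, so you must argue for every admissible $M_i$; and since $|M_i|$ can be on the order of $n$ while $|N^+_{D_2}(a)|\approx n\eta p\ll n$, an adversarial $M_i$ can place heads on essentially all of $N^+_{D_2}(a)$, collapsing $a$'s degree and destroying (P1) and hence Hamiltonicity. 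The paper's Lemma~\ref{lemma:auxiliary2 is pseudorandom} is built precisely to dodge this: by constraining the contracted pairs to $V_1\times V_\ell$ and the reservoir to the exterior edges, the degree of a contracted vertex becomes $|N^+_C(x_i)\cap(V_0\cup V_1)|$, a quantity independent of $\mathcal M$ which therefore concentrates. Two smaller issues you would also need to repair: a K\H{o}nig colour class of $L$ in the tails/heads bipartite model is a union of directed paths \emph{and cycles}, and a proper sub-cycle cannot lie inside any Hamilton cycle, so cycles must be broken up and paths contracted whole rather than arc-by-arc; and $\eta=1/\log n$ is much too large, since transferring the (P2) bound $e_D(X)\le(1-\lambda)|X|\log^{8.02}n$ to the condition (P2$^*$) $e(X)\le|X|\log^{2.1}n$ needed by Theorem~\ref{thm:pseudorandom is Hamiltonian} forces a thinning factor of roughly $\log^6 n$ --- which is exactly where the $\log^{14}n$ density threshold in these theorems originates.
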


    \begin{theorem}
        \label{thm:CountingPseudoRandom}
         Let $D$ be a $(n,\lambda,p)$ pseudo-random digraph where
        $p=\omega\left(\frac {\log^{14}n}n\right)$. Then $D$
        can be contains $(1 - o_{\lambda }(1))^nn!p^n$ directed Hamilton cycles.
    \end{theorem}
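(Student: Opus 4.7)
The natural object to analyze is the permanent of the adjacency matrix $A$ of $D$: by a classical identity, $\mathrm{per}(A)$ counts the \emph{$1$-factors} of $D$, i.e., spanning subdigraphs in which every vertex has in-degree and out-degree exactly one. A $1$-factor corresponds to a permutation $\sigma$ of $V(D)$ with $(v,\sigma(v))\in E(D)$ for every $v$, and is a Hamilton cycle iff $\sigma$ consists of a single $n$-cycle. The plan is therefore twofold: (i) obtain a sharp lower bound on $\mathrm{per}(A)$, and (ii) show that, among all $1$-factors of $D$, a $\Theta(1/n)$ proportion are Hamilton cycles, matching the fact that a uniformly random permutation of $[n]$ is an $n$-cycle with probability exactly $1/n$.

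For (i), since $D$ is $(n,\lambda,p)$-pseudo-random, every row and column sum of $A$ lies in $(1\pm o_\lambda(1))np$. Normalising $A$ produces an approximately doubly stochastic matrix, to which the Van der Waerden--Egorychev theorem (or, equivalently, Schrijver's lower bound for nearly $(np)$-regular $0/1$ matrices) applies and yields
\[
\mathrm{per}(A)\;\geq\;(1-o_\lambda(1))^n\, n!\, p^n.
\]
The hypothesis $p=\omega(\log^{14}n/n)$ is more than enough to absorb the normalisation errors into the $(1-o_\lambda(1))^n$ factor; this part is essentially black-boxed.

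For (ii), I would stratify non-Hamilton $1$-factors by the length $\ell\leq n/2$ of one of their cycles. Pseudo-randomness (through codegree and closed-walk counts) bounds the number of directed $\ell$-cycles of $D$ by roughly $(np)^\ell/\ell$, and Br\'egman's inequality bounds the number of $1$-factor extensions of any fixed $\ell$-cycle by $(1+o_\lambda(1))^{n-\ell}(n-\ell)!\, p^{n-\ell}$. A careful inclusion-exclusion on cycle-type should then deliver
\[
\#\{\text{non-Hamilton $1$-factors}\}\;\leq\;\bigl(1-\tfrac{1-o_\lambda(1)}{n}\bigr)\mathrm{per}(A),
\]
so that subtracting from (i) gives $N_{\mathrm{HC}}(D)\geq \tfrac{1-o_\lambda(1)}{n}\,\mathrm{per}(A)\geq (1-o_\lambda(1))^n n!p^n$ (the factor $1/n$ is folded into the $(1-o_\lambda(1))^n$). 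The main obstacle is precisely this step: the crude union bound over $\ell$ only yields $O(\log n)\cdot n!\,p^n$, which is essentially the size of $\mathrm{per}(A)$ itself and thus gives no useful lower bound on $N_{\mathrm{HC}}(D)$. To recover the correct ratio $\Theta(1/n)$ between Hamilton and general $1$-factors, I would implement a switching argument: given a non-Hamilton $1$-factor with cycles $C_1\ni u$ and $C_2\ni v$, merge them by rewiring the arcs out of $u$ and $v$, which produces a valid Hamilton $1$-factor exactly when the two replacement arcs lie in $D$; by pseudo-randomness this happens for a $\Theta(p^2)$ fraction of pairs $(u,v)$, and the resulting many-to-one correspondence between non-Hamilton and Hamilton $1$-factors has the right multiplicities to recover the desired $1/n$ ratio.
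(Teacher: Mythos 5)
Your approach is genuinely different from the paper's, and it contains a real gap. The paper proves this theorem by modifying the proof of Theorem \ref{thm:CountingRandom} along the lines of the proof of Theorem \ref{thm:PackingPseudoRandom}: choose a random $(\ell,s)$-partition, use pseudo-randomness to extract near-regular bipartite graphs between consecutive parts (Lemma \ref{lemma:bipartite is pseudorandom}), apply Theorem \ref{VanDerWaerden} to each such bipartite graph to count its perfect matchings, concatenate these matchings into matching path systems, and close each path system into a directed Hamilton cycle by verifying (via Lemma \ref{lemma:auxiliary2 is pseudorandom}) that the contracted auxiliary digraph satisfies the hypotheses of Theorem \ref{thm:pseudorandom is Hamiltonian}. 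Your approach instead works with the permanent of the full adjacency matrix and tries to recover the Hamilton fraction by a cycle-merging switching argument, which is a Cuckler--Kahn-style philosophy that could, in principle, give an alternative proof; but the sketch does not carry it through.

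The critical gap is your step (ii). You correctly observe that the naive union bound over cycle lengths fails, but the switching argument you offer to replace it is almost entirely unworked, and the quantity it hinges on is not controlled by the paper's pseudo-randomness hypotheses. For a fixed non-Hamilton $1$-factor $\sigma$ and vertex $u$, the number of valid merges equals $\big|N^-_D(\sigma(u)) \cap \sigma^{-1}\big(N^+_D(u)\big)\big|$, the size of the intersection of two sets of size roughly $np$. To conclude this is $\Theta(np^2)$ you need a codegree- or discrepancy-type estimate, and Definition \ref{definition:pseudorandom} provides none: (P1) controls degrees, (P2) controls edges inside small sets, and (P3) controls edge densities only between \emph{large disjoint} sets, not intersections of neighbourhoods. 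The multiplicity accounting needed to convert the switchings into the claimed $\Theta(1/n)$ ratio --- bounding the number of switches leaving a $1$-factor with $k$ cycles and the number landing on one with $k-1$ cycles, uniformly over cycle types --- is also left unaddressed, and this is the technical heart of any such argument. Even your step (i) requires first extracting a spanning near-regular subdigraph of $D$ before a Van der Waerden--type bound applies, which is doable under (P1)--(P3) but not automatic. The paper's layered route avoids all of these issues because Theorem \ref{VanDerWaerden} is applied only to the $m\times m$ bipartite graphs between layers, where regularisation is handled by Lemma \ref{lemma:bipartite is pseudorandom}, and cycle structure is never an issue since the auxiliary-digraph Hamiltonicity closes every path system into a single cycle.
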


    We have only included the proof of Theorem \ref{thm:PackingPseudoRandom}
    which modifies the proof of Theorem \ref{thm:PackingRandom} to
    the pseudo-random setting. The other results can be proven in a similar
    manner (these other proofs are in fact slightly easier).

 \begin{remark}
    We also draw attention to the fact that all of our proofs also apply to
    $\mathcal G(n,p)$ with the same probability thresholds as in Theorem
    \ref{thm:PackingRandom}, \ref{thm:CoverRandom} and \ref{thm:CountingRandom}. Although all these results are known in $\mathcal G(n,p)$ (and in fact even much more), our approach provides us with short and elegant proofs. For convenience, we state the exact statements which follow from our proofs:
    	\begin{itemize}
    		\item For $p = \omega \big ( \frac {\log ^4 n }{n} \big )$ our
    		approach gives that $G \sim \mathcal G(n,p)$ whp contains
    		$(1-o(1))np/2$
    		edge disjoint Hamitlon cycles. As mentioned in the packing section
    		above, here it is known that for all $p$ whp
    		${G} \sim \mathcal G(n,p)$ contains
    		$\lfloor \delta ({G})/2 \rfloor$ edge disjoint
    		Hamilton cycles (see \cite{knox2013edge},
    		\cite{krivelevich2012optimal} and \cite{kuhn2014hamilton}).
    		\item For $p = \omega \big ( \frac {\log ^2 n }{n} \big )$ our
    		approach gives that $G \sim \mathcal G(n,p)$ whp
    		contains $(1+o(1))np/2$
    		Hamilton cycles covering all edges of $G$. As
    		mentioned in the covering section above, here it is known
    		that there is some constant $C>0$ such that for
    		$\frac {\log ^C n}{n} \leq p \leq 1 - n^{-1/18}$ whp
    		${G} \sim \mathcal G(n,p)$ has an edge covering with
    		$\lceil \Delta ({G})/2 \rceil$ Hamilton cycles
    		(see \cite{hefetz2014optimal}).
    		\item For $p = \omega \big ( \frac {\log ^2 n }{n} \big )$ our
    		approach gives that $G \sim \mathcal G(n,p)$ whp contains
    		$(1\pm o(1))^nn!p^n$ Hamilton cycles. As
    		mentioned in the counting section above, here it is known
    		that such a bound already applies for $p>
    		\frac{ \log n + \log \log n + \omega (1) }{n}$.
    	\end{itemize}
\end{remark}

\subsection{Notation and terminology}
We denote by $D_n$ the \emph{complete} directed
graph on $n$ vertices (that is, all the possible $n(n-1)$ arcs
appear), and by $D_{n,m}$ the complete bipartite digraph with parts
$[n]$ and $[m]$. Given a directed graph $F$ and a vector $\bar{p}\in
(0,1]^{E(F)}$, we let $\mathcal D(F,\bar{p})$ denote the probability
space of sub-digraphs $D$ of $F$, where for each arc $e\in E(F)$, we
add $e$ into $E(D)$ with probability $p_e$, independently at random.
In the special case where $p_e=p$ for all $e$, we simply denote it
by $\mathcal D(F,p)$. In the case where $F=D_n$, we write $\mathcal
D(n,p)$ and in the case $F=D_{n,m}$ we write $\mathcal D(n,m,p)$. Given
a digraph $D$ and two sets
$X,Y \subset V(D)$ we write $E_D(X,Y) = \{ \overrightarrow{xy} \in E(D): x \in X, y\in Y\}$. Also let $e_{D}(X,Y) = |E_D(X,Y)|$ and $e_{D}(X) = |E_D(X,X)|$. We will also occasionally
make use of the same notation for graphs $G$, i.e. $e_G(X,Y)$.
For a vertex $v$ we denote $N_D^+(v)=E_D(\{v\},V(D))$ and $N_D^-(v)=E_D(V(D),\{v\})$. Let $d_D^+(v)=|N_D^+(v)|$ and $d_D^-(v)=|N_D^-(v)|$.
Lastly, we write $x\in a\pm b$ to mean that $x$ is in the interval $[a-b,a+b]$.

%
%
%
%
%
%
%
%
%
%
%
%
%
%
%
%
%
%
%
%

\section{Overview and auxiliary results}

\subsection{Proof overview}

Our aim in this subsection is to provide an overview of the proofs of Theorems
\ref{thm:PackingRandom}, \ref{thm:CoverRandom} and \ref{thm:CountingRandom}. In particular, we hope to highlight the similarities and differences which occur for the packing, counting and covering problems. To do this, we will first describe an approach to solve similar problems for a more restricted model of random digraph. We then outline how these results can be used to solve the corresponding problems for ${\cal D}(n,p)$.

Suppose that we are given a partition $[n] = V_0 \cup V_1 \cup \cdots \cup V_{\ell }$, with $|V_0| = s$ and $|V_j| = m$ for all $j\in [\ell ]$ so that $n = m \ell  + s$ (here $s= \omega(m)$ and $\ell=\text{polylog}(n)$). Consider the following way to select random digraph $F$:
	\begin{enumerate}
		\item For all $j\in [\ell -1]$, directed edges from  $V_j$ and
		$V_{j+1}$ are adjoined to $F$ with probability
		$p_{in}$ independently. Let $F_j$ denote
		this sub-digraph of $F$;
		\item The directed edges ({\emph a}) in $V_0$  ({\emph b})
		from $V_0$ to $V_1$ ({\emph c}) from 	$V_{\ell }$ to $V_0$
		and ({\emph d}) from $V_{\ell }$ to $V_1$ are adjoined to $F$ with probability
		$p_{ex}$ independently. Let $F _0$ denote
		this subdigraph of $F$.
	\end{enumerate}
This selection process gives a distribution on a set of digraphs. We will write $\mathcal F$ to denote this distribution, and write $F \sim {\cal F}$ to denote a digraph $F$ chosen according to it.
We will describe how to show that if $F \sim {\cal F}$ then whp, for appropriate values of $p_{in}$ and $p_{ex}$, we have:
	\begin{enumerate}[(i)*]
		\item $(1-o(1))mp_{in}$ edge disjoint Hamilton cycles which contain
		almost all edges of type $1$ in $F$;
		\item $(1 + o(1))mp_{in}$ Hamilton cycles which cover all edges of type
		$1$ in $F$;
		\item $(1-o(1))^{n-s}(m!)^{\ell -1} p_{in}^{n-s-m}$ directed Hamilton cycles in
		$F$.
	\end{enumerate}

To do this, we first expose edges of type $1.$ above. Using known matching results, for $p_{in} =\omega ( \log ^C m/m)$ and $\ell \leq m$ say, it can be shown that whp for every $j\in [\ell -1]$:
	\begin{enumerate}[(i)]
		\item $F_j$ contains $L_{pack} := (1 - o(1))mp_{in}$ edge disjoint perfect matchings, $\{M_i^j\}_{i=1}^{L_{pack}}$;
		\item $F_j$ contains $L_{cov}: =(1+o(1))mp_{in}$ perfect matchings covering all edges of $F_j$, $\{M_i^j\}_{i=1}^{L_{cov}}$;
		\item $F_j$ contains $(1-o(1))^m m!p_{in}^m$ perfect matchings.
	\end{enumerate}
Now note that in (i), (ii) and (iii) above, by combining a perfect matching from each ${F}_j$ for each $j\in [\ell -1]$ we obtain a collection of $m$ vertex disjoint directed paths from $V_1$ to $V_{\ell }$, covering $\bigcup _{j\in [\ell]} V_{j}$. We refer to such a collection of paths ${\cal P}$ as a \emph{matching path system}. 
	\begin{enumerate}[(i)]
		\item For each $i\in [L_{pack}]$, by combining the disjoint matchings $\{M_i^j\}_{j=1}^{\ell-1}$ from (i) in this way,
		we obtain a matching path system $\mathcal P_i$. This gives $L_{pack}$ 
		edge disjoint matching path systems ${\cal P}_1 ,\ldots ,
		{\cal P}_{L_{pack}}$.
		\item For each $i\in [L_{cov}]$, by combining the matchings 
		$\{M_i^j\}_{j=1}^{\ell-1}$ from (ii) in 
		this way we obtain a matching path system $\mathcal P_i$. This gives 
		$L_{cov}$ matching path systems	${\cal P}_1 ,\ldots , {\cal P}_{L_{cov}}$, 
		which cover all edges in the digraphs $F_j$ for $j\in [\ell -1]$.
		\item Lastly, by choosing different matching between
		the partitions from (iii), we have many choices for how to
		build our matching path system ${\cal P}$. We obtain at least
		$(1-o(1))^{m\ell } (m!)^{\ell -1} p_{in}^{m(\ell -1)} \geq (1-o(1))^{n}
		(m!)^{\ell -1} p_{in}^{n-s-m}$ such choices for ${\cal P}$.
	\end{enumerate}

Now let ${\cal P} = \{P_1,\ldots ,P_m\}$ be a fixed matching path system. Assume that each $P_i$ begins at a vertex $s_i \in V_1$ and terminates at a vertex $t_i \in V_{\ell }$. These vertices are distinct by construction. We will now describe how to include all paths in ${\cal P}$ into a directed Hamilton cycle. To do this simply contract each directed path $P_i$ to single vertex which we also denote by $P_i$. Now expose the edges of type 2. above and view them as edges of a random digraph on vertex set ${\widetilde V} = V_0 \cup \{P_1,\ldots ,P_m\}$. Note that 
the following edges all appear with probability $p_{ex}$:	
\begin{itemize}
		\item All directed edges in $V_0$. These come from edges of type 2. 
		({\emph a}) 
		above;
		\item Directed edges from $V_0$ to $\{P_1,\ldots ,P_m\}$ and 
		from $\{P_1,\ldots ,P_m\}$ to $V_0$. These come respectively from 
		edges of type 2. ({\emph b}) and ({\emph c}) above;
		\item Directed edges within the set $\{P_1,\ldots ,P_m\}$. These
		come from edges of type 2 ({\emph d}) above. (Here we may obtain 
		a loop on the vertices $P_i$, which we simply ignore.)
\end{itemize}
As all such edges appear independently, the resulting random digraph is distributed identically to ${\cal D}(s + m, p_{ex})$. 
By known Hamiltonicity result for ${\cal D}(n,p)$, provided that $p_{ex} =\omega\left( \log ^C (m+s)/(m+s)\right)$ we obtain that this digraph is Hamiltonian with very high probability. However, it is easy to see that by construction  a directed Hamilton cycle in this contracted digraph pulls back to a directed Hamilton cycle in $F$, which contains the paths in ${\cal P}$ as directed subpaths. Thus we have shown how to turn a single matching path system into a Hamilton cycle.

Now in the case of (ii)*, we can complete each of the matching path systems ${\cal P}_{1},\ldots {\cal P}_{L_{cov}}$ into Hamilton cycles by using edges of type 2. described above. This can also be used to show whp many of the matching paths systems from (iii)* complete to (distinct) directed Hamilton cycles. However, to pack the Hamilton cycles in the case of (i)* more care must be taken as we cannot use the same edges twice. To get around this, we distribute the edges of type 2. to create an individual random digraph for each ${\cal P}_i$. Provided that $p_{ex}$ is sufficiently large (and $m,\ell$ and $s$ are carefully chosen) each of these individual random digraphs will be Hamiltonian whp. This completes the description of (i)*, (ii)* and (iii)* above.

Now our approach for dealing with the packing, covering and counting problems on ${\cal D}(n,p)$ is to show that with high probability we can break $D \sim {\cal D}(n,p)$ into subdigraphs distributed similarly to $F$ above. However the type of decomposition chosen is again dependent on the problem at hand. With the packing it is important that these graphs are edge disjoint. With the covering, it will be important every edge of ${\cal D}(n,p)$ appears as an edge of type 1. in one of these digraphs (recall these were the only edges guaranteed to be covered in (ii)*). The counting argument is less sensitive, and simply work with many such digraphs. Dependent on the problem, we can apply our strategy above for $F \sim {\cal F}$ to each of these digraphs separately. Combining the resulting Hamilton cycles from either (i)*, (ii)* or (iii)* in each of these digraphs will then solve the corresponding problem for ${\cal D}(n,p)$.

\subsection{Probabilistic tools}

We will need to employ bounds on large deviations of random
variables. We will mostly use the following well-known bound on the
lower and the upper tails of the binomial distribution due to
Chernoff (see \cite{alon2004probabilistic}, \cite{janson2011random}).

\begin{lemma}[Chernoff's inequality]
    \label{Chernoff}
    Let $X \sim \operatorname{Bin} (n, p)$ and let
    $\mu = \mathbb{E}(X)$. Then
    \begin{itemize}
        \item $\Pr[X < (1 - a)\mu] < e^{-a^2\mu/2}$ for every $a > 0$;
        \item $\Pr[X > (1 + a)\mu] < e^{-a^2\mu/3}$ for every $0 < a < 3/2$.
    \end{itemize}
\end{lemma}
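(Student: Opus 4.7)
The plan is to use the standard moment generating function (Chernoff) argument, applying Markov's inequality to an exponentiated version of $X$ and then optimising over the parameter in the exponent. The key fact I would exploit is that $X \sim \operatorname{Bin}(n,p)$ decomposes as a sum $X = \sum_{i=1}^n X_i$ of i.i.d.\ Bernoulli$(p)$ indicators, so the MGF factorises.

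For the upper tail, for any $t > 0$ Markov's inequality gives $\Pr[X \geq (1+a)\mu] \leq e^{-t(1+a)\mu}\,\mathbb{E}[e^{tX}]$, and independence of the $X_i$ together with the elementary bound $1+x \leq e^x$ gives $\mathbb{E}[e^{tX}] = (1-p+pe^t)^n \leq e^{\mu(e^t-1)}$. Thus
$$\Pr[X \geq (1+a)\mu] \leq \exp\!\bigl(\mu(e^t-1) - t(1+a)\mu\bigr)$$
for every $t>0$. Optimising over $t$ (minimiser $t = \ln(1+a) > 0$) produces the clean exponent $-\mu\bigl((1+a)\ln(1+a) - a\bigr)$. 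The lower tail is handled analogously: take $t > 0$, apply Markov to $e^{-tX}$, use the same factorisation to obtain $\Pr[X \leq (1-a)\mu] \leq \exp\!\bigl(\mu(e^{-t}-1) + t(1-a)\mu\bigr)$, and optimise at $t = -\ln(1-a) > 0$ (valid because $0 < a < 1$, and larger $a$ make the upper tail statement vacuous since $X \geq 0$) to arrive at the exponent $-\mu\bigl((1-a)\ln(1-a) + a\bigr)$.

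The final step, and the only mildly delicate part, is converting these clean exponents into the stated $a^2\mu/3$ and $a^2\mu/2$ bounds. This reduces to two elementary calibration inequalities: $(1+a)\ln(1+a) - a \geq a^2/3$ on $0 < a < 3/2$, and $(1-a)\ln(1-a) + a \geq a^2/2$ on $0 < a < 1$. Each follows from a short convexity argument: set $f(a) := (\text{LHS}) - (\text{RHS})$, verify $f(0) = f'(0) = 0$, and check the sign of the appropriate higher derivative on the stated interval (this is precisely where the restriction $a < 3/2$ enters in the upper tail). The only real obstacle is bookkeeping these two inequalities; once they are in hand the proof is complete. I would also note that the argument uses nothing about $X_i$ beyond independence and $[0,1]$-boundedness, so the same bound extends verbatim to any sum of independent $[0,1]$-valued random variables with mean $\mu$, which is the form in which Chernoff bounds will likely be applied later in the paper.
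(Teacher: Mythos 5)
The paper does not prove this lemma; it states it as a standard fact and cites \cite{alon2004probabilistic} and \cite{janson2011random}. Your moment-generating-function argument is the standard proof found in those references and is essentially correct, so you have reproduced the intended (omitted) proof.

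One small caution on the calibration step: for the upper tail you claim the inequality $(1+a)\ln(1+a) - a \geq a^2/3$ on $(0,3/2)$ follows by verifying $f(0)=f'(0)=0$ and checking the sign of a higher derivative, but $f''(a) = \frac{1}{1+a} - \frac{2}{3}$ changes sign at $a=1/2$, so $f$ is not convex on the whole interval and the argument as phrased does not close. The inequality is still true: $f'$ is positive up to some $a_0 \in (1/2, 3/2)$ and negative after, so $f$ rises from $0$ to a maximum and then decreases, and one finishes by checking $f(3/2) > 0$ directly. The lower-tail calibration $(1-a)\ln(1-a)+a \geq a^2/2$ on $(0,1)$ really is a pure convexity argument, and the case $a \geq 1$ is vacuous as you note (though you wrote ``upper tail'' where you meant ``lower tail'').
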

\begin{remark}\label{CheHyper} The conclusions of Chernoff's inequality remain the same
    when $X$ has the hypergeometric distribution (see \cite{janson2011random}, Theorem~2.10).
\end{remark}

We will also find the following bound useful.

\begin{lemma}\label{Che}
    Let $X\sim \Bin(n,p)$. Then $\Pr\left[X\geq k\right]\leq \left(\frac{enp}{k}\right)^k.$
\end{lemma}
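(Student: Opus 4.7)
The plan is to bound $\Pr[X \geq k]$ by a first-moment (union bound) argument over all $k$-element subsets of coordinates, and then apply the standard estimate $\binom{n}{k} \leq (en/k)^k$. Writing $X = \sum_{i=1}^n X_i$, where the $X_i$ are independent $\text{Bernoulli}(p)$ variables, note that the event $\{X \geq k\}$ forces at least $k$ of the $X_i$ to equal $1$. Hence $\{X \geq k\}$ is contained in the union $\bigcup_{S} \bigcap_{i \in S}\{X_i = 1\}$ taken over all $S \in \binom{[n]}{k}$. By the union bound and independence,
\[
\Pr[X\geq k] \;\leq\; \sum_{S\in \binom{[n]}{k}} \prod_{i\in S}\Pr[X_i = 1] \;=\; \binom{n}{k}p^k.
\]

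To finish, I would invoke the standard inequality $\binom{n}{k} \leq \left(\tfrac{en}{k}\right)^k$, which follows from $\binom{n}{k} \leq \tfrac{n^k}{k!}$ together with $k! \geq (k/e)^k$ (a consequence of $e^k = \sum_{j\geq 0} k^j/j! \geq k^k/k!$). Combining these two estimates yields
\[
\Pr[X\geq k] \;\leq\; \binom{n}{k}p^k \;\leq\; \left(\frac{en}{k}\right)^k p^k \;=\; \left(\frac{enp}{k}\right)^k,
\]
which is exactly the claimed bound.

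There is no real obstacle here; the statement is essentially a clean repackaging of a union bound over $k$-subsets with a standard binomial coefficient estimate. It is worth noting why one would want this bound in addition to Chernoff's inequality: when $k$ is substantially larger than the mean $np$, the quantity $(enp/k)^k$ decays faster than the Chernoff-type tail $e^{-\Theta(k)}$, so Lemma \ref{Che} is precisely the tool of choice when controlling atypically large deviations of binomial counts — the regime that arises naturally when bounding the probability that many edges land inside a small fixed vertex set.
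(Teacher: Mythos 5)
Your proof is correct and takes essentially the same route as the paper: bound $\Pr[X\geq k]$ by $\binom{n}{k}p^k$ via a union bound over $k$-subsets, then apply $\binom{n}{k}\leq (en/k)^k$. The paper simply states these two inequalities without elaboration; you have filled in the standard justifications.
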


\begin{proof} Just note that
    $$\Pr\left[X\geq k\right]\leq \binom{n}{k}p^k\leq
    \left(\frac{enp}{k}\right)^k.$$
\end{proof}

\subsection{Perfect matchings in bipartite graphs and random bipartite graphs}

The following lower bound on the number of perfect matchings in an
$r$-regular bipartite graph is also known as the Van der Waerden
conjecture and has been proven by Egorychev \cite{egorychev1981solution} and by
Falikman \cite{falikman1981proof}:

\begin{theorem} \label{VanDerWaerden}
    Let $G=(A\cup B,E)$ be an $r$-regular bipartite graph with parts of
    sizes $|A|=|B|=n$. Then, the number of perfect matchings in $G$ is
    at least $\left(\frac{r}{n}\right)^nn!$.
\end{theorem}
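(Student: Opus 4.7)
The plan is to identify the number of perfect matchings of $G$ with the permanent of a naturally associated matrix, and then invoke the Van der Waerden conjecture for doubly stochastic matrices. Label $A = \{a_1, \ldots, a_n\}$ and $B = \{b_1, \ldots, b_n\}$, and let $M \in \{0,1\}^{n \times n}$ be the bipartite adjacency matrix of $G$, with $M_{ij} = 1$ iff $a_i b_j \in E(G)$. Perfect matchings of $G$ correspond bijectively to permutations $\sigma \in S_n$ with $M_{i,\sigma(i)} = 1$ for all $i$, so the number of perfect matchings equals the permanent
$$\operatorname{per}(M) \;=\; \sum_{\sigma \in S_n} \prod_{i=1}^{n} M_{i,\sigma(i)}.$$
Since $G$ is $r$-regular, every row and column of $M$ sums to $r$, so $D := \tfrac{1}{r} M$ is doubly stochastic and $\operatorname{per}(M) = r^n \operatorname{per}(D)$. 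The statement therefore reduces to showing $\operatorname{per}(D) \geq n!/n^n$ for every $n \times n$ doubly stochastic matrix $D$, which is precisely the Van der Waerden conjecture as resolved by Egorychev \cite{egorychev1981solution} and Falikman \cite{falikman1981proof}.

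For the Van der Waerden step, the strategy is to analyze minimizers of $\operatorname{per}$ on the Birkhoff polytope $\Omega_n$ of doubly stochastic matrices. Since $\Omega_n$ is compact and $\operatorname{per}$ is continuous, a minimizer $D_0$ exists. A Lagrange multiplier analysis, exploiting that $\operatorname{per}$ is multilinear in rows and in columns, shows that at $D_0$ the cofactor permanents $\operatorname{per}(D_0(i|j))$ take a common value $\lambda$ wherever $(D_0)_{ij} > 0$, and are at least $\lambda$ elsewhere. A separate perturbation argument (using Birkhoff's theorem to stay inside $\Omega_n$) rules out zero entries at a minimizer. The decisive input is the Alexandrov-Egorychev inequality for permanents, a reverse Cauchy-Schwarz bound of the form
$$\operatorname{per}(A_1, A_2, c_3, \ldots, c_n)^2 \;\geq\; \operatorname{per}(A_1, A_1, c_3, \ldots, c_n)\,\operatorname{per}(A_2, A_2, c_3, \ldots, c_n)$$
for positive columns $A_1,A_2,c_3,\ldots,c_n$, together with its equality characterization. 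Combined with the critical point conditions, this inequality forces $D_0 = J_n := \tfrac{1}{n} \mathbf{1}\mathbf{1}^{T}$, at which a direct computation yields $\operatorname{per}(J_n) = n!/n^n$, closing the argument.

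The main obstacle is the Alexandrov-Egorychev inequality itself, whose proof requires a careful analysis of the signature of the Hessian of $\operatorname{per}$ restricted to appropriate linear subspaces, and draws on the mixed-discriminant machinery from convex geometry. Once this analytic tool is granted, the reduction to the permanent and the critical-point analysis above are essentially bookkeeping, and translating back via $\operatorname{per}(M) = r^n \operatorname{per}(D) \geq r^n \cdot n!/n^n = (r/n)^n n!$ recovers the claimed bound.
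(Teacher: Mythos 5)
Your proposal is correct and takes the same route the paper intends: the paper states this as a known consequence of the Van der Waerden conjecture (resolved by Egorychev and Falikman) and does not prove it, while you correctly supply the standard reduction — perfect matchings of an $r$-regular bipartite graph correspond to $\operatorname{per}(M)$ for the $0/1$ biadjacency matrix $M$, whose rescaling $M/r$ is doubly stochastic, so $\operatorname{per}(M) = r^n \operatorname{per}(M/r) \geq r^n \cdot n!/n^n$. Your further sketch of the Egorychev--Falikman argument (minimizer analysis on the Birkhoff polytope plus the Alexandrov inequality) is an accurate outline of the cited result, though the paper itself treats it as a black box.
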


The following lemma  is an easy corollary of the so called Gale-Ryser theorem (see, e.g. \cite{lovaszproblems}).
\begin{lemma}(Lemma 2.4, \cite{ferber2014packing})\label{lemma:many-matchings}
    Let $G$ is a random bipartite graph between two vertex sets both of
    size $n$, where edges are chosen independently with probability
    $p = \omega (\log n / n)$. Then with probability $1 - o(1/n)$ the graph
    $G$ contains $(1- o(1))np$ edge disjoint perfect matchings.
\end{lemma}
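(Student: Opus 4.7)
The plan is to first use concentration to show that $G$ is nearly regular, then extract a large regular spanning subgraph, and finally decompose that subgraph into perfect matchings via König's edge-colouring theorem.

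First, choose $\eta = \eta(n) = o(1)$ slowly enough that $\eta^2 np = \omega(\log n)$; for concreteness set $\eta = (\log n / np)^{1/3}$. For each vertex $v$, $d_G(v) \sim \Bin(n,p)$ has mean $np$, so Chernoff (Lemma~\ref{Chernoff}) gives $\Pr[d_G(v) \notin (1\pm\eta)np] \leq 2e^{-\eta^2 np/3} = o(1/n^2)$. A union bound over the $2n$ vertices then shows that with probability at least $1-o(1/n)$ every vertex of $G$ has degree in the interval $\bigl((1-\eta)np,(1+\eta)np\bigr)$. Call this event $\mathcal{E}$.

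Next, set $r = \lfloor(1-3\eta)np\rfloor$. Assuming $\mathcal E$ holds, I would show that $G$ contains an $r$-regular spanning subgraph. Equivalently, one seeks a subgraph $G'\subseteq G$ with $\deg_{G'}(v) = d_G(v) - r$ at each vertex $v$, so that $G\setminus G'$ is $r$-regular. This is a degree-constrained subgraph problem, and I would analyse it via max-flow/min-cut on the usual auxiliary network with source $s$ and sink $t$: each edge $sv$ and $vt$ carries capacity $d_G(v)-r$, and each edge of $G$ is oriented from $A$ to $B$ with capacity $1$. The cut inequality reduces to checking that
\[
e_G(S,B\setminus T) \;\geq\; \sum_{v\in S}(d_G(v)-r) - \sum_{v\in T}(d_G(v)-r)
\]
for every $S\subseteq A$ and $T\subseteq B$. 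Using $\mathcal E$, the right-hand side is at most $2\eta np(2|S|-|T|)$. The left-hand side admits two complementary lower bounds: $e_G(S,B\setminus T)\geq |S|\bigl((1-\eta)np - |T|\bigr)$ by counting from $S$, and $e_G(S,B\setminus T)\geq np(|S|-|T|) - 2\eta n\cdot np$ via the degree bound on $B\setminus T$. A short case split based on whether $|T|\leq \eta np$ or $|T|> \eta np$ (with a symmetric treatment when $|S|$ is close to $n$) verifies the required inequality and produces the $r$-regular subgraph $H$. Finally, König's edge-colouring theorem decomposes $H$ into exactly $r = (1-o(1))np$ edge-disjoint perfect matchings, all contained in $G$.

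The main obstacle is the flow inequality in the middle step: the two lower bounds on $e_G(S,B\setminus T)$ are tight in different regimes, and one must check the full range of subset sizes and the transition $|T|\approx \eta np$. A cleaner alternative, which is presumably what the cited paper does in appealing to ``Gale-Ryser'', is to bypass flows and build the $r$-regular subgraph by a switching argument: Gale-Ryser produces \emph{some} bipartite graph $F$ with the desired leftover degree sequence $\bigl(d_G(v)-r\bigr)_v$, and local swaps along alternating $4$-cycles inside the symmetric difference $F\triangle G$ progressively push $F$ into $G$, with the near-regularity from $\mathcal E$ guaranteeing that such swaps always exist until $F\subseteq G$.
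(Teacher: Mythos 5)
Your high-level plan — Chernoff near-regularity, extract an $r$-regular spanning subgraph with $r=(1-o(1))np$, then apply K\"onig — is the right one, and it matches the route the cited source takes (and which this paper uses for the analogous pseudo-random statement, Lemma~\ref{lemma:bipartite is pseudorandom}). The max-flow/min-cut formulation is equivalent to the subgraph form of Gale--Ryser, so that part is fine. The gap is in the verification of the cut inequality. Your two lower bounds on $e_G(S,B\setminus T)$ both come from vertex degrees, and they genuinely fail in the regime where $|S|$ and $|T|$ are both of order $n$ but $|S|-|T|$ is small (equivalently, when $|X|,|Y|$ are both large with $|X|+|Y|$ barely exceeding $n$ in the Gale--Ryser form). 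For instance with $|S|=|T|=n/2$ and $p$ polylogarithmic (so $np \ll n$), bound~1 gives $|S|((1-\eta)np - n/2)<0$ and bound~2 gives $-2\eta n^2p<0$, while the cut inequality still requires $e_G(S,B\setminus T)\gtrsim 2\eta n^2p$. So the proposed case split on $|T|\lessgtr \eta np$ does not close the argument. What is missing is a third ingredient: a direct Chernoff concentration bound $e_G(X,Y)=(1\pm o(1))|X||Y|p$ for all pairs with $|X|,|Y|\ge k$ for a suitable $k=\Theta(\log n/p)$, established by a union bound (valid since $kp=\omega(\log n)$). With that added, the three bounds cover disjoint regimes and the cut inequality checks out; this is exactly property~$(i)$ in the paper's proof of Lemma~\ref{lemma:bipartite is pseudorandom}. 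Finally, a small conceptual correction: the ``Gale--Ryser plus switching'' alternative you sketch confuses two different statements. The degree-sequence-realization form of Gale--Ryser produces a graph on $K_{n,n}$ and would indeed require a switching argument to be pushed inside $G$; but the version relevant here is the subgraph form (equivalent to the max-flow/min-cut condition you already wrote), which produces the $r$-regular subgraph \emph{inside} $G$ directly, with no switching needed.
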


\subsection{Converting paths into Hamilton cycles}\label{sec:convertPathToHam}

The following definitions will be convenient in our proofs.

\begin{definition} Suppose that $X$ is a set of size $n$ and that $\ell ,m,s$ are positive integers with $n = m\ell +s$. A sequence ${\mathcal V} = (V_0, V_1,\ldots,V_{\ell})$ of subsets of $X$ is called an $(\ell,s)$-partition of $X$ if
    \begin{itemize}
        \item $X=V_0 \cup V_1\cup\ldots \cup V_{\ell}$ is a partition of $X$, and
        \item $|V_0|=s$, and
        \item $|V_i| = m$ for every $i\in [\ell ]$.
    \end{itemize}
\end{definition}

\begin{definition}  \label{Dn}   Given an $(\ell,s)$-partition
    ${\mathcal V} = (V_0, V_1,\ldots,V_{\ell})$ of a set $X$, let
    $D_n({\mathcal V})$ denote the digraph on
    vertex set $X=[n]$ consisting of all edges $\overrightarrow{uv}$
    such that:
    \begin{enumerate}
        \item $u\in V_j$, $v \in V_{j+1}$ for some $j\in [\ell-1]$, or
        \item $u\in V_0$ and $v\in V_0 \cup V_1$, or
        \item $u \in V_{\ell}$, $v \in V_{0} \cup V_1$.
    \end{enumerate}
    We call edges of type 1. \emph{interior edges} and call edges of
    type 2. and  3. \emph{exterior edges}.
\end{definition}

Suppose that we are given two disjoint sets $V$ and $W$ and a
digraph $D$ on vertex set $V \cup W$. Suppose also that
we have $m$ disjoint ordered pairs ${\mathcal M} = \{(w_i,x_i): i\in
[m]\} \subset W\times W$. Then we define the following auxiliary
graph.

\begin{definition}\label{def:auxGraph}

    Let $D({\mathcal M},V)$ denote the following auxiliary digraph on vertex set ${\mathcal M} \cup V$ where $\M = \{u_1,\ldots ,u_m\}$ and each $u_i$ refers to the pair $(w_i,x_i)$. Then given any two vertices $v_1,v_2 \in V$, we have:
    \begin{itemize}
        \item $\overrightarrow{v_1v_2}$ is an edge in
        $D({\mathcal M},V)$ if it appears in
        $D$;
        \item $\overrightarrow{v_1u_i}$ is an edge
        in $D({\mathcal M},V)$ if $\overrightarrow{v_1w_i}$
        is an edge in ${D}$;
        \item $\overrightarrow{u_iv_1}$ is an edge
        in $D({\mathcal M},V)$ if $\overrightarrow{x_iv_1}$
        is an edge in ${D}$;
        \item $\overrightarrow{u_iu_j}$ is an edge
        in $D({\mathcal M},V)$ if $\overrightarrow{x_iw_j}$
        is an edge in $D$.
    \end{itemize}
\end{definition}

\begin{remark} \label{remark: Ham cycles correspond to Ham cycles} Note that if $D({\mathcal M},V)$ contains a directed Hamilton cycle and $W$ can be decomposed into vertex disjoint directed $w_ix_i$-paths for all $i\in [m]$
(paths starting at $w_i$ and ending at $x_i$) then $D$ contains a directed Hamilton cycle.
\end{remark}

%
%
%
%
%
%
%
%
%
%
%
%
%
%
%
%
%
%
%
%

\section{Counting Hamilton cycles in $\D(n,p)$}

In this section we prove Theorem \ref{thm:CountingRandom}. The proof of this theorem is relatively simple and contains most of the ideas for the other main results and therefore serves as a nice warmup. 

\begin{proof} We will first prove the upper bound. For this, let $X_H$ denote the random variable that counts the
    number of Hamilton cycles in $D\sim \mathcal D(n,p)$. It is clear
    that $\mathbb{E}[X_H]=(n-1)!p^n$. By Markov's inequality, we therefore
    have $$Pr(X_H\geq (1+o(1))^nn!p^n) \leq
    \frac {{\mathbb E}[X_H]}{(1+o(1))^nn!p^n} = (1-o(1))^n = o(1).$$
    Thus $X_H\leq (1+o(1))^nn!p^n$ w.h.p..

    We now prove the lower bound, i.e. $X_H\geq (1-o(1))^nn!p^n$ w.h.p..
    Let $\alpha:=\alpha(n)$ be a function tending to infinity arbitrarily
    slowly with $n$. We prove the lower bound on $X_H$ under the assumption
    that $p \geq \alpha ^2 \log ^2 n/ n$. Let us take $s$ and $\ell $
    to be integers
    where $s$ is roughly $\frac{n}{\alpha \log n}$ and $\ell$ is roughly
    $2\alpha\log n$ and there is an integer $m$ with $n = \ell m + s$.
    Also fix a set $S\subseteq V(G)$ of order $s$ and let us set $V' = V({D}) \setminus
    S$. The set $S$ will be used to turn collections of vertex disjoint paths into Hamilton cycles. 

    To begin, take a fixed $(\ell , s)$-partition ${\cal V} = (V_0,
    V_1,\ldots ,V_{\ell })$ with $V_0 = S$. We claim the following:
	\vspace{1mm}

	\noindent \textbf{Claim:} Given ${\cal V}$ as above, taking $D \sim {\cal D}(n,p)$, the random digraph
	$D \cap D_n({\cal V})$ (where $D_n({\mathcal V})$ is as in Definition \ref{Dn}) contains at least
	$(1-o(1))^n m!^{\ell -1 }p^{m(\ell -1)}$ distinct Hamilton cycles with
	probability $1 - o(1)$.
	\vspace{1mm}	
	
    To see this, first expose the interior edges of $D \cap D_n({\cal V})$.
    For each $j\in [\ell -1]$ let $F_{j}: = E_D(V_j,V_{j+1})$. Observe that
    $F_j\sim D(m,m,p)$. It will be convenient for us
    to view $F_j$ as a bipartite graph obtained by ignoring the
    edge directions. Since
    $p=\omega\left(\frac{\log n}{m}\right)$, by Lemma
    \ref{lemma:many-matchings} with probability $1-o(1/n)$ we
    conclude that $F_j$ contains
    $(1-o(1))mp$ edge-disjoint perfect matchings. Taking a union bound over
    all $j\in [\ell -1]$ we
    find that whp $F_j$ contains a $(1-o(1))mp$-regular subgraph for all
    $j\in [\ell -1]$.

    Apply Theorem \ref{VanDerWaerden} to each of these subgraphs. This give
    that for each $j\in [\ell -1]$ the graph $F_j$ contains at least
    $(1-o(1))^{m}m !p^{m}$ perfect matchings.
    Combining a perfect matching from each of the $F_j$'s we obtain
    a family $\mathcal P$ of $m$ vertex disjoint paths
    which spans $V'$. Let ${\Lambda }_{\cal V}$ denote the set of
    all such ${\cal P}$. From the choices of perfect matchings in
    each $F_j$ we obtain that whp 
    \begin{align}
    	\label{equation: size of Lambda set control}
       |{\Lambda }_{\cal V}| \geq
        \left((1-o(1))^{m} m!p^{m}\right)^{\ell - 1}=
        (1-o(1))^{n}\left(m!\right)^{\ell -1} p^{n-s}.
    \end{align}
    Now let ${\cal P} = \{P_1,\ldots ,P_m\} \in {\Lambda }_{\cal V}$. Let
    $${\mathcal M} = \{(u_{i},v_{i}) \in V_1\times V_{\ell }: P_{i}
    \mbox{ is a }u_{i}-v_{i} \mbox{ directed path}\}.$$
    Let us consider the  auxiliary digraph
    $D(\M,V_0)$ as in Definition \ref{def:auxGraph}. As we expose the
    exterior edges of $D$ in $D_n({\cal V})$ it is easy to see that
    $D(\M ,V_0)\sim \mathcal D(s+m,p)$. Furthermore, a Hamilton cycle in
    $D(\M,V_0)$ gives a Hamilton cycle in $D$ by Remark
    \ref{remark: Ham cycles correspond to Ham cycles}. However, it is
     well-known digraphs in ${\cal D}(n,p')$
    are Hamiltonian  w.h.p. for say
    $p' > 2\log n /n$ (\cite{frieze1988algorithm}). Since
    $$p= \frac {\alpha ^2 \log ^2 n }{n}
    \geq  \frac {\alpha \log n }{s + m}
    = \omega \left ( \frac{\log (s+m)}{s+m} \right )$$ we find that
    $D(\M,V_0)$ is Hamiltonian w.h.p.. Thus we have shown that
    	\begin{equation}
    		\label{equation: extension to Hamitlon cycle probability}
    		Pr \Big ({\cal P} \mbox { does not extend to a Hamilton cycle in }
    		{D} \cap D_n({\cal V}) \Big ) = o(1).
    	\end{equation}
    Let ${\Lambda }_{\cal V} ' \subset {\Lambda }_{\cal V}$ denote the set of
    ${\cal P} \in {\Lambda }_{\cal V}$ which do not extend to a Hamilton
    cycle in $D \cap D_n({\cal V})$. By
    \eqref{equation: extension to Hamitlon cycle probability} we have
    ${\mathbb E}(|{\Lambda }_{\cal V} '|) =
    o(|{\Lambda }_{\cal V}|)$. Using Markov's inequality we obtain
    that $|{\Lambda }_{\cal V} '| = o(|{\Lambda }_{\cal V}|)$ whp. Combined
    with \eqref{equation: size of Lambda set control} this gives that
    $|{\Lambda }_{\cal V} \setminus {\Lambda }_{\cal V} '|
    \geq (1-o(1))^{n}\left(m!\right)^{\ell -1} p^{n-s}$ whp. Lastly,
    to complete the proof of the claim, note that any two distinct families
    $\mathcal P, \mathcal P'
    \in {\Lambda }_{\cal V} \setminus {\Lambda }_{\cal V} '$ yield
    different Hamilton cycles -- indeed, by deleting the vertices of
    $S$ from the Hamilton cycle it is easy to recover the paths ${\cal P}$.
    This proves the claim.
	
	Now to complete the proof of the theorem, let $\Gamma $ denote the set
	of $(\ell ,s)$-partitions ${\cal V}$ with $V_0 =S$ which satisfy the
	statement of the claim. By Markov's inequality we have $|\Gamma |
	\geq (1-o(1)) \frac {(n-s)!}{(m!)^{\ell }}$ whp. Since for distinct
	${\cal V}, {\cal V}' \in {\Gamma }$ the Hamilton cycles in $D
	\cap D_n({\cal V})$ are all distinct, we find that whp $D$ contains
	at least
	\begin{align*}
        |\Gamma | (1-o(1))^{n} \left(m !\right)^{\ell - 1} p^{n-s}
			& \geq (1-o(1))^{n}\frac {(n-s)!}{(m!)^{\ell }}(m!)^{\ell -1}p^{n} \\       	
        	& =(1-o(1))^{n}\frac {(n-s)!}{m!}p^{n}
        	=(1-o(1))^n n!p^n
    \end{align*}
   distinct Hamilton cycles. The final equality here holds since
   $m < n/ \alpha \log n $ gives that $m! < e^{n/\alpha }$ and
   $(n)_s \leq n^s =(1+o(1))^n$ since $s = o(n/\log n)$. This completes the proof of the
   theorem.
\end{proof}

%
%
%
%
%
%
%
%
%
%
%
%
%
%
%
%
%
%
%
%

\section{Packing Hamilton cycles in ${\cal D}(n,p)$}

In this section we prove Theorem \ref{thm:PackingRandom}. The heart of the argument is contained in the following lemma.

\begin{lemma}
    \label{lemma: building cycles in (l,s)-partitions}
    Let ${\mathcal V} = (V_0, V_1,\ldots,V_{\ell})$ be an
    $(\ell ,s)$-partition of a set $X$ of size $n = \ell m +s$.
    Suppose that we select a random subdigraph $F$ of
    $D_n({\mathcal V})$ as follows:
    	\begin{itemize}
    		\item include each interior directed
    		edge of $D_n({\cal V})$ independently with probability $p_{in}$;
    		\item include each exterior directed
    			edge of $D_n({\cal V})$ independently with probability $p_{ex}$.
    	\end{itemize}
    Then, provided $p_{in}= \omega( \log n/m)$ and
    $p_{ex} =\omega(  mp_{in} \log n/(m+s))$,  w.h.p.\ $F$ contains
    $(1-o(1))mp_{in}$ edge-disjoint Hamilton cycles.
\end{lemma}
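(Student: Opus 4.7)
The plan is to split $F$ into its interior and exterior parts, use the interior edges to build many edge-disjoint matching path systems, and then close each system into a Hamilton cycle using an independent random slice of the exterior edges.

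First I would reveal only the interior edges of $F$. For each $j\in[\ell-1]$ the bipartite sub-digraph $F_j$ of $F$ on $(V_j,V_{j+1})$ is distributed as $\mathcal{D}(m,m,p_{in})$, and since all its edges already point from $V_j$ to $V_{j+1}$, directed perfect matchings of $F_j$ coincide with perfect matchings of the underlying bipartite graph. As $p_{in}=\omega(\log n/m)$, Lemma \ref{lemma:many-matchings} yields $L:=(1-o(1))mp_{in}$ edge-disjoint perfect matchings $M_1^j,\ldots,M_L^j$ in $F_j$ with probability $1-o(1/n)$. A union bound over $j$ gives this for every $j\in[\ell-1]$ w.h.p. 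Concatenating $M_i^1,\ldots,M_i^{\ell-1}$ then produces, for each $i\in[L]$, a matching path system $\mathcal{P}_i$ of $m$ vertex-disjoint directed paths from $V_1$ to $V_\ell$ spanning $V_1\cup\cdots\cup V_\ell$, and by construction the $\mathcal{P}_i$'s are pairwise edge-disjoint.

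Next I would reveal and then randomly partition the exterior edges of $F$: after exposing these edges, assign each one independently and uniformly to a bin $F^{(i)}$ for some $i\in[L]$. Each $F^{(i)}$ then contains every exterior edge of $D_n(\mathcal{V})$ independently with probability $p^*:=p_{ex}/L=\omega(\log n/(s+m))$, and the bins are pairwise edge-disjoint. For each $i\in[L]$, let $\mathcal{M}_i\subset V_1\times V_\ell$ collect the endpoint pairs of the paths in $\mathcal{P}_i$, and let $D_i:=F^{(i)}(\mathcal{M}_i,V_0)$ be the auxiliary digraph of Definition \ref{def:auxGraph}. Checking Definitions \ref{Dn} and \ref{def:auxGraph} edge-by-edge shows $D_i\sim\mathcal{D}(s+m,p^*)$. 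Since $p^*(s+m)=\omega(\log n)\ge\omega(\log(s+m))$, a sharp form of Frieze's Hamiltonicity theorem \cite{frieze1988algorithm} gives that $D_i$ is Hamiltonian with probability $1-o(1/n)$; a union bound over $i\in[L]\subseteq[n]$ shows that w.h.p.\ every $D_i$ is Hamiltonian. By Remark \ref{remark: Ham cycles correspond to Ham cycles}, each such Hamilton cycle in $D_i$ lifts to a Hamilton cycle of $F$ that uses only the interior edges of $\mathcal{P}_i$ together with exterior edges from $F^{(i)}$. Edge-disjointness of the $\mathcal{P}_i$'s and of the bins $F^{(i)}$ then yields the required $L=(1-o(1))mp_{in}$ edge-disjoint Hamilton cycles in $F$.

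The main technical point is driving the failure probability of the Hamiltonicity of each $D_i$ down to $o(1/n)$ so the union bound over $L\le n$ indices actually closes. The slack $p^*(s+m)=\omega(\log n)$ built into the hypothesis on $p_{ex}$ is precisely what makes this feasible: non-Hamiltonicity of $\mathcal{D}(n',p')$ for $p'n'\gg\log n'$ is dominated by the probability that some vertex has zero in- or out-degree, which here is at most $2(s+m)(1-p^*)^{s+m-1}=n^{-\omega(1)}$.
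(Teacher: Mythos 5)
Your argument matches the paper's proof almost line for line: expose interior edges, apply Lemma \ref{lemma:many-matchings} to each $F_j$ to build $L=(1-o(1))mp_{in}$ edge-disjoint matching path systems, split the exterior edges uniformly into $L$ bins so that each contracted digraph $F^{(i)}(\mathcal M_i,V_0)\sim\mathcal D(s+m,p_{ex}/L)$, invoke Hamiltonicity of $\mathcal D(s+m,p_{ex}/L)$ with failure probability $o(1/n)$, union bound, and lift via Remark \ref{remark: Ham cycles correspond to Ham cycles}. The only cosmetic difference is that you expose the exterior edges before assigning them to bins while the paper assigns $h(e)$ first and exposes second; the resulting joint distribution of the bins is identical, and your closing remark sketching why the Hamiltonicity failure probability is $n^{-\omega(1)}$ is a helpful clarification that the paper leaves implicit.
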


\begin{proof}
    We begin by exposing the interior edges of $F$.
    For $j\in [\ell-1]$ all
    edges $E_{D_n}(V_j,V_{j+1})$ appear in $E_F(V_j,V_{j+1})$
    independently with probability $p_{in}$. By ignoring the
    orientations, we can view $E_F(V_j,V_{j+1})$ as a bipartite graph.
    From Lemma \ref{lemma:many-matchings}, since $p_{in} =\omega( \log m/m)$
    we find that \whp for all $j\in [\ell-1]$ the graph 
    $E_F(V_j,V_{j+1})$ contains $L:=(1-o(1))mp_{in}$ edge-disjoint
    perfect matchings $\{\mathcal M_{j,k}\}_{k=1}^{L}$.
    For each $k\in [L]$, combining the edges
    in the matchings
    $ \{{\mathcal M}_{j,k}\}_{j \in [\ell-1]}$ gives $m$
    directed paths, each directed from $V_1$ to $V_{\ell}$ and
    covering $\bigcup _{i=1}^{\ell} V_i$. Let $P_{k,1},\ldots ,
    P_{k,m}$ denote these paths and ${\mathcal P}_k =
    \{P_{k,1},\ldots , P_{k,m}\}$.

    Now for each exterior edge $e$ of $D_n({\mathcal V})$ choose a value
    $h(e) \in [L]$ uniformly at random, all values chosen
    independently. Now expose the exterior edges of $F$ and for
    each $i\in [L]$ let $H_i$ denote the subgraph of $F$
    with edge set
    $\{e \in E(F) : e \mbox{ exterior with } h(e) = i \}$.

    \begin{claim} For any $k\in [L]$,
    the digraph $C_k:= \{\overrightarrow{e}: \overrightarrow{e} \mbox { is a directed edge of some path in }{\mathcal P}_k\} \cup H_k$ contains a directed Hamilton     cycle with probability $1 - o(1/n)$.
    \end{claim}
    \noindent Note that the proof of the lemma follows from the claim,
    by taking a union bound over all $k\in [L]$.

    To prove the claim, let
    $${\mathcal M}_k = \{(u_{k,i},v_{k,i}) \in V_1\times V_{\ell }: P_{k,i}
    \mbox{ is a directed }u_{k,i}-v_{k,i} \mbox{ path}\}.$$ By Remark \ref{remark: Ham cycles correspond to Ham cycles} it
    suffices to prove that the auxiliary digraph $C_k({\mathcal M}_k,V_0)$ contains a
    directed Hamilton cycle.
    Note that $|V(C_k({\mathcal M}_k,V_0))| = s + m$ and that
    $C_k({\mathcal M}_k,V_0)\sim {\mathcal D}(s+m, p_{ex}/L)$.
    Since
    $p_{ex}/L =\omega( \log n/(s+m))$, the digraph
    $C_k({\mathcal M}_k,V_0)$ is Hamiltonian with probability $1-o(1/n)$.
    By Remark \ref{remark: Ham cycles correspond to Ham cycles},
    this completes the proof of the claim, and therefore the lemma.
\end{proof}

The following lemma allows us to cover the edges of the complete digraph in a reasonably balanced way using copies of $D_n(\V )$.

\begin{lemma}
    \label{lemma: even partition of the edges}
    Suppose that $X$ is a set of size $n$ and that $\ell , m,s \in {\mathbb N}$ satisfying $n = m\ell +s$, with $t=\omega(\ell \log n)$, $t=\omega((n^2/s^2) \log n)$ and $s= o(n)$ and $m = o(s)$. Let   $\mathcal V^{(1)},\ldots ,\mathcal V^{(t)}$ be a collection of  $(\ell ,s)$-partitions
    of $X$ chosen uniformly and independently at random, where
    ${\mathcal V}^{(i)} = (V^{(i)}_0,\ldots ,V^{(i)}_{\ell})$.
    Then \whp for each pair $u,v\in X$, the directed edge
    $e = \overrightarrow{uv}$ satisfies:
    \begin{enumerate}
        \item $|A_e| = (1 + o(1))\frac {t}{\ell}$ where
        $A_e := \big \{i\in [t]:
        e \mbox{ is an interior edge of }D_n({\mathcal V}^{(i)}) \big \}$.
        \item $|B_e| = (1 +o(1)) \frac {s^2t}{n^2}$ where
        $B_e := \big \{i\in [t]: e \mbox{ is an exterior edge of }
        D_n({\mathcal V}^{(i)}) \big \}$.
    \end{enumerate}
\end{lemma}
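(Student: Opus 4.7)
The plan is to fix a directed edge $e = \overrightarrow{uv}$, compute the two single-partition probabilities exactly, apply Chernoff's inequality (Lemma \ref{Chernoff}) to $|A_e|$ and $|B_e|$ separately, and then take a union bound over all $n(n-1)$ directed edges. Since the partitions $\mathcal V^{(1)},\ldots,\mathcal V^{(t)}$ are chosen independently and uniformly, for any fixed $e$ the indicator variables $\mathbf{1}\{e \text{ is interior in } \mathcal V^{(i)}\}$ for $i \in [t]$ are i.i.d.\ Bernoulli, so $|A_e| \sim \Bin(t, p_{\mathrm{int}})$, where $p_{\mathrm{int}}$ is the probability that $e$ is interior in a single uniform $(\ell,s)$-partition, and similarly $|B_e| \sim \Bin(t, p_{\mathrm{ext}})$.

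To compute $p_{\mathrm{int}}$ and $p_{\mathrm{ext}}$, observe that by symmetry of the uniform random partition, for distinct $u,v$ and indices $j,k \in \{0,\ldots,\ell\}$ one has $\Pr[u \in V_j,\, v \in V_k] = |V_j|(|V_k| - \mathbf{1}_{j=k})/(n(n-1))$. Summing over the $\ell-1$ interior index pairs $(j,j+1)$ with $j \in [\ell-1]$ and over the four exterior configurations listed in Definition \ref{Dn} gives
\[
p_{\mathrm{int}} = \frac{(\ell-1)m^2}{n(n-1)}, \qquad p_{\mathrm{ext}} = \frac{s(s-1) + 2sm + m^2}{n(n-1)}.
\]
Using $m\ell = n - s$ together with the hypotheses $s = o(n)$ and $m = o(s)$ (which also force $s \to \infty$ and $\ell = (n-s)/m \to \infty$, since $m \leq s = o(n)$), a direct calculation yields $p_{\mathrm{int}} = (1+o(1))/\ell$ and $p_{\mathrm{ext}} = (1+o(1))s^2/n^2$, so $\mathbb{E}|A_e| = (1+o(1))t/\ell$ and $\mathbb{E}|B_e| = (1+o(1))s^2 t/n^2$.

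To conclude, the hypotheses $t = \omega(\ell \log n)$ and $t = \omega((n/s)^2 \log n)$ give $\mathbb{E}|A_e|,\, \mathbb{E}|B_e| = \omega(\log n)$. I would then choose $a = a(n) \to 0$ slowly enough that both $a^2\mathbb{E}|A_e|$ and $a^2\mathbb{E}|B_e|$ exceed $9 \log n$, so that two applications of Chernoff's inequality give
\[
\Pr\bigl[|A_e| \notin (1 \pm a)\mathbb{E}|A_e|\bigr] + \Pr\bigl[|B_e| \notin (1 \pm a)\mathbb{E}|B_e|\bigr] \leq 4 e^{-3\log n} = o(n^{-2}),
\]
and a union bound over the $n(n-1)$ ordered pairs $(u,v)$ finishes the argument. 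I do not expect a genuine obstacle here: this is a first-moment plus Chernoff plus union bound computation, and the only mild care required is in verifying the leading asymptotics for $p_{\mathrm{int}}$ and $p_{\mathrm{ext}}$, for which the four hypotheses on $m, s, \ell$ are tailored precisely.
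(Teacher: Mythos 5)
Your proposal is correct and follows essentially the same route as the paper's proof: compute the exact per-partition probabilities that a fixed directed edge is interior or exterior (yielding precisely the same expressions $(\ell-1)m^2/(n(n-1))$ and $(s(s-1)+2sm+m^2)/(n(n-1))$), simplify using $s=o(n)$, $m=o(s)$, apply Chernoff with a slowly-vanishing relative error $a$, and union-bound over the $n(n-1)$ ordered pairs. The paper is slightly terser in justifying the expectation formulas, but the structure, constants, and use of the hypotheses $t=\omega(\ell\log n)$ and $t=\omega((n/s)^2\log n)$ are identical.
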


\begin{proof}
Let $\mathcal V^{(1)},\ldots ,\mathcal V^{(t)}$ be $(\ell,s)$-partitions chosen uniformly and independently at random. Given a fixed directed edge $e$, the sizes $|A_e|$ and $|B_e|$ are binomially distributed on a set $t$ with
$${\mathbb E}(|A_e|) = (\ell - 1)\frac{m^2}{n(n-1)}t \quad \mbox {and} \quad {\mathbb E}(|B_e|) = \frac{m^2 + 2sm + s(s-1)}{n(n-1)}t.$$
Using that $s = o(n)$ and $n = m\ell +s$ this gives that ${\mathbb E}(|A_e|) = (1+o(1))\frac {t}{\ell }$ and
    using $m = o(s)$ gives ${\mathbb E}(|B_e|) = (1+o(1))\frac {s^2t}{n^2}$. Therefore by Lemma \ref{Chernoff}
    \begin{equation}
        \label{equation: number of appearance of u,v in parts}
        \Pr\Big ( \big ||A_e| - \mathbb{E}(|A_e|) \big | > a \mathbb{E}(|A_e|) \Big )
        \leq 2e^{-a^2\mathbb{E}(|A_e|)/3}
        \leq 2e^{-(1+o(1))a^2t/3\ell }
         =o( 1/n^2).
    \end{equation}
    Here we used that $a^2t/3\ell \geq 3\log n$ for $a=o(1)$.
    Similarly using that $ts^2/n^2= \omega (\log n)$ we find
    $\Pr\Big ( \big ||B_e| - \mathbb{E}(|B_e|) \big |
    > a \mathbb{E}(|B_e|) \Big ) = o( 1/n^2)$. Taking a union bound
    over all directed edges gives that \whp 1. and 2. hold for all
    $e$.    \end{proof}

Now we are ready to prove Theorem \ref{thm:PackingRandom}.

\begin{proof}[Proof of Theorem \ref{thm:PackingRandom}]
    Let $\alpha = \alpha (n)$ be some function tending to infinity
    arbitrarily slowly with $n$. Suppose that $p \geq \alpha ^6 \log ^4 n/ n$ and
    let $\ell = \alpha ^3 \log n$, $s = n/\alpha ^2\log n$
    be integers with $n=m\ell + s$. Note that this gives
    $m = (1+o(1))n/\alpha ^3\log n$. Additionally set $t = \alpha ^5 \log ^3 n$.
    With these choices, the hypothesis of Lemma
    \ref{lemma: even partition of the edges} is satisfied.
    Let ${\mathcal V}^{(1)},\ldots ,{\mathcal V}^{(t)}$ be a
    collection of $(\ell ,s)$-partitions of $X = [n]$, chosen
    so that the conclusions of Lemma
    \ref{lemma: even partition of the edges} are satisfied.
    Therefore $|A_e| = (1 + o(1))t/\ell = (1+o(1))\alpha ^2\log ^2n$ and
    $|B_e| = (1+ o(1))s^2t/n^2 = (1+o(1))\alpha \log n$ for every $e$.

    To begin, whenever we expose the edges of a
    directed graph $D \sim  {\cal D}(n,p)$, we will
    assign the edges of $D$ among $t$ edge
    disjoint subdigraphs $D^{(1)},\ldots ,
    D^{(t)}$. The digraphs
    $D^{(i)}$ are constructed as follows.
    For each edge $e$ independently choose a random value
    $h(e) \in A_e \cup B_e$
    where an element in $A_e$ is selected with probability
    $(1-1/\alpha)/|A_e|$
    and an element in $B_e$ is selected with probability $1/\alpha |B_e|$.
    For each $i\in [t]$, we take $D^{(i)} $ to be the
    digraph given by $D^{(i)} = \{e \in E(D): h(e) = i\}$. We prove
    that \whp $D^{(i)}$ contains edge-disjoint
    Hamilton cycles covering almost all of its edges.

    First note that all edges $e$ of
    $D_n({\mathcal V}^{(i)})$ appear independently in
    $D^{(i)}$. If $e$ is an interior edge then the
    probability that it appears is
    $p(1-1/\alpha )/|A_e| \geq (1- o(1))p/\alpha ^2 \log ^2n := p_{in}$.
    Similarly, each exterior edge $e$ in
    $D_n({\mathcal V}^{(i)})$ appears in $D^{(i)}$
    with probability
    $p/\alpha |B_e| \geq (1-o(1))p/\alpha ^2 \log n := p_{ex}$.
    Using these values, select $F$ as in Lemma
    \ref{lemma: building cycles in (l,s)-partitions}.
    Also set $L = (1-o(1))mp_{in}$.
    Due to monotonicity we conclude that for every $i \in [t]$ we have
    \begin{align}
        \label{equation: coupling of probabilities}
        \Pr(D^{(i)} \mbox{ contains } L \mbox{ edge disjoint Ham. cycles})
        		 \geq
        \Pr(F \mbox{ contains } L \mbox{ edge disjoint Ham. cycles}).
    \end{align}
	Now we claim with these choices of $p_{in}$ and $p_{ex}$ the hypothesis
	of Lemma \ref{lemma: building cycles in (l,s)-partitions}
	are satisfied. Indeed, using $p\geq \alpha ^6\log ^4n/n$ gives
	$$(1+o(1))p_{in} = \frac{p}{\alpha ^2 \log ^2n} \geq
	\frac{\alpha ^4 \log ^2n}{n} = (1+o(1))\frac{\alpha  \log n}{m},$$
	and so $p_{in} = \omega (\log n/m)$. Similarly we have
	\begin{align*}
	p_{ex} &= (1+o(1))\frac{p}{\alpha ^2 \log n} =
	(1+o(1))p_{in } \log n
	= (1+o(1)) \frac{\alpha m p_{in}\log n}{ s},
	\end{align*}
	and $p_{ex} = \omega (mp_{in}\log n/(m+s))$. Thus by
	Lemma \ref{lemma: building cycles in (l,s)-partitions},
    $\Pr(F \mbox{ contains }
    L \mbox{ edge disjoint Ham. cycles})
    = 1 - o(1)$.
    Summing over $i\in [t]$ and combining with
    \eqref{equation: coupling of probabilities},
    this proves that \whp $D$ contains at least
    $(1-o(1))Lt = (1-o(1))mp_{in}t
        = (1-o(1)). \frac{n - s}{\ell}. \frac{p\ell}{t}.t
        = (1-o(1))np$ edge-disjoint Hamilton cycles.
\end{proof}

%
%
%
%
%
%
%
%
%
%
%
%
%
%
%
%
%
%
%
%

\section{Covering $\mathcal D(n,p)$ with Hamilton cycles}
In this section we prove Theorem \ref{thm:CoverRandom}. To begin we first prove the following lemma. The proof makes use of the max-flow min-cut theorem
and the integrality theorem for network flows (see Chapter 7 in \cite{vanLintWilson}).

\begin{lemma}
    \label{lemma: completing arbitrary small graph into an r-factor}
    Let $G = (A,B,E)$ be a bipartite graph, with $|A| = |B| = N$ and $\delta (G) \geq d$. Suppose that $G$ has the following properties:
    \begin{itemize}
        \item For any $X \subset A$, $Y \subset B$ with
        $|X| \geq \frac{N}{4}$ and $|Y| \geq \frac{N}{4}$ we
        have $e_G(X,Y) \geq \frac{d N}{40}$,
        \item For any $X \subset A$ with $|X| \leq \frac{N}{4}$,
        if $e_G(X,Y) \geq \frac{3d|X|}{4}$ for some $Y \subset B$
        then $|Y| \geq 2|X|$,
        \item For any $Y \subset B$ with $|Y| \leq \frac{N}{4}$,
        if $e_G(X,Y) \geq \frac{3d|Y|}{4}$ for some $X\subset A$
        then $|X| \geq 2|Y|$.
    \end{itemize}
    Then given any integer $r$ with $r \leq \frac{d}{80}$ and a bipartite graph $H$ on vertex set $A\cup B$ with $\Delta: = \Delta (H) \leq \frac{r}{2}$, there exists a subgraph $G'$ of $G$ which is edge disjoint from $H$ such that $G' \cup H$ is $r$-regular.
\end{lemma}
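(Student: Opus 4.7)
The plan is to recast the goal as finding a bipartite $(r-d_H)$-factor inside $G \setminus H$: a subgraph $G' \subseteq G \setminus H$ with $d_{G'}(v) = r - d_H(v)$ for every $v \in A \cup B$. Since $G'$ would be edge-disjoint from $H$ and $\Delta(H) \leq r/2 \leq r$ ensures the required degrees are nonnegative, $G' \cup H$ would then be $r$-regular, as required.

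To produce $G'$, I would apply max-flow min-cut to the standard flow network with source $s$, sink $t$, edges $s \to a$ of capacity $r - d_H(a)$ for each $a \in A$, edges $b \to t$ of capacity $r - d_H(b)$ for each $b \in B$, and internal edges $a \to b$ of capacity $1$ for each $ab \in E(G) \setminus E(H)$. By the integrality theorem for network flows, an integer max-flow of value $F := rN - e(H) = \sum_{a}(r - d_H(a))$ saturates all source-side edges and its support on the internal edges yields the desired $G'$. Hence it suffices to check every $(s,t)$-cut has capacity at least $F$. A cut is parameterized by the source-side vertex sets $X \subseteq A$ and $Y \subseteq B$; writing $Y' := B \setminus Y$, $x := |X|$, $y := |Y'|$, and using $d_H(A \setminus X) = e(H) - d_H(X)$, the cut condition simplifies to
\[
e_{G \setminus H}(X, Y') + r(N - x - y) + d_H(X) + d_H(Y') \geq e(H). \qquad (\star)
\]

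I would verify $(\star)$ by case analysis on $x$ and $y$, using the global bound $e(H) \leq \Delta(H) \cdot N \leq rN/2$. If both $x, y \leq N/4$, the slack $r(N - x - y) \geq rN/2$ alone dominates $e(H)$. If both $x, y \geq N/4$, property~1 gives $e_G(X, Y') \geq dN/40 \geq 2rN$ (using $r \leq d/80$), so $e_{G \setminus H}(X, Y') \geq 2rN - e(H) \geq 3rN/2$; combined with $r(N - x - y) \geq -rN$ this yields LHS $\geq rN/2 \geq e(H)$.

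The main obstacle is the mixed case $x \leq N/4 < y$ (the opposite mixed case is symmetric via property~3). Here I would use the inclusion--exclusion identity $e(H) - d_H(X) - d_H(Y') = e_H(A \setminus X, B \setminus Y') - e_H(X, Y')$, the bound $e_{G \setminus H}(X, Y') + e_H(X, Y') \geq e_G(X, Y')$, and $e_H(A \setminus X, B \setminus Y') \leq d_H(B \setminus Y') \leq (r/2)(N - y)$ to reduce $(\star)$ to the cleaner inequality
\[
e_G(X, Y') \geq r\bigl(x + \tfrac{y}{2} - \tfrac{N}{2}\bigr).
\]
If $y \leq N - 2x$, the right-hand side is non-positive and we are done. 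Otherwise $|Y| = N - y < 2|X|$ with $|X| \leq N/4$, so the contrapositive of property~2 gives $e_G(X, Y) < 3dx/4$; since $\delta(G) \geq d$, this yields $e_G(X, Y') \geq dx - 3dx/4 = dx/4 \geq 20rx$, comfortably exceeding $rx \geq r(x + y/2 - N/2)$. The crux is that naive degree bounds fail when $x$ is small, so one must combine $\delta(G) \geq d$ with the expansion-type properties~2 and~3, exploiting the slack in $r \leq d/80$ to pull out enough edges between $X$ and $Y'$.
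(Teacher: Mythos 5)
Your proof is correct and follows essentially the same approach as the paper's: you set up the identical max-flow network with source/sink capacities $r - d_H(v)$ and internal unit capacities on $E(G\setminus H)$, invoke integrality, and verify the min-cut bound by a case analysis on the sizes of the source-side sets, using property 1 when both are large and property 2 (you via its contrapositive) in the mixed case. The bookkeeping differs slightly — you isolate a clean reduced inequality via an inclusion–exclusion identity and split the mixed case by $|Y|\gtrless 2|X|$, whereas the paper splits on $e_G(A_s,B_s)\gtrless \tfrac34 d|A_s|$ — but these are dual formulations of the same argument.
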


\begin{proof}
    Given a graph $F$ on $V(G)$ and a vertex $v$ of $G$, let
    $d_F(v)$ denote the degree of $v$ in $F$.
    By assumption, we have $d_H(v) \leq r/2$ for all $v \in A \cup B$.
    We wish to find a subgraph $G'$ of $G$ which is edge-disjoint from
    $H$ so that $d_{G'}(v) + d_H(v) = r$ for all $v \in V(G)$.
    We prove the existence of $G'$ by representing it as a flow in
    an appropriate network.

    Consider the following network $D$ on vertex set
    $V(G) \cup \{s,t\}$, with source $s$ and sink $t$.
    For each $a\in A$, the edge
    $\overrightarrow{sa} \in E(D)$ and it has capacity $r - d_H(a)$.
    For each $b\in B$, the edge $\overrightarrow{bt} \in E(D)$
    and it has capacity $r - d_H(b)$.
    Lastly, each edge in $E(G \setminus H)$ is directed from $A$ to
    $B$ and has capacity $1$. Using the integrality theorem for
    network flows, it is sufficient to show that there is
    a flow from $s$ to $t$ of value
    \begin{equation}
        \label{equation: desired value of flow}
        V = \sum _{a\in A} (r - d_H(a)) = rN - \sum _{a\in A} d_H(a).
    \end{equation}
    By the max-flow min-cut theorem it is sufficient to show that
    $D$ does not contain an $s-t$ cut of capacity less than $V$.

    To see this, suppose for contradiction that
    $\{s\} \cup A_s \cup B_s$ and $A_t \cup B_t \cup \{t\}$
    forms such a cut, $A_v \subset A$ and $B_v \subset B$ for
    $v\in \{s,t\}$. The capacity of this cut is
    \begin{align*}
        C = \sum _{a \in A_t} (r - d_H(a))
        + \sum _{b\in B_s} (r - d_H(b))
        + e_{G\setminus H}(A_s,B_t).
    \end{align*}
    We may assume that $|A_s| \leq N/4$ or $|B_t| \leq N/4$.
    Indeed, otherwise from the statement of the lemma we have
    $e_G(A_s,B_t) \geq dN/40$  and
    \begin{align*}
        C \geq e_{G\setminus H}(A_s,B_t)
        \geq e_{G}(A_s,B_t) - \Delta N \geq
        {dN}/{40} - {r N}/{2} \geq rN \geq V,
    \end{align*}
    since $r \leq d/80$. We will focus on the case $|A_s| \leq N/4$
    as the case $|B_t| \leq N/4$ follows from an identical argument.

    Note that since $e_{G\setminus H}(A_s, B) \geq
    (\delta (G) - \Delta )|A_s| \geq (d - \Delta )|A_s|$, we find
    \begin{equation}
        \label{equation: middle cut}
        e_{G\setminus H}(A_s ,B_t)
        \geq
        e_{G\setminus H}(A_s,B) - e_{G\setminus H}(A_s,B_s)
        \geq
        (d- \Delta )|A_s| - e_{G}(A_s,B_s).
    \end{equation}
    From \eqref{equation: middle cut} it follows that if
    $e_{G}(A_s,B_s) \leq \frac{3d|A_s|}{4}$ then
    \begin{align*}
        C \geq \sum _{a\in A_t} (r - d_H(a)) +
        e_{G\setminus H}(A_s ,B_t)
        & \geq
        \sum _{a\in A_t} (r - d_H(a)) + (d - \Delta - \frac{3d}{4}) |A_s|\\
        & \geq
        \sum _{a\in A_t} (r - d_H(a)) + r |A_s| \geq V,
    \end{align*}
    where the second last inequality holds since $d/4 \geq 2r \geq \Delta +r$ and
    the last inequality holds by \eqref{equation: desired value of flow}.
    If $e_{G}(A_s,B_s) \geq \frac{3d|A_s|}{4}$, since $|A_s| \leq
    |A|/4$, by the hypothesis of the lemma we have
    $|B_s| \geq 2|A_s|$. But then, since $\Delta \leq r/2$ we have
    \begin{align*}
        C
        \geq
        \sum _{a \in A_t} (r - d_H(a)) + \sum _{b \in B_s} (r - d_H(b))
        & \geq
        \sum _{a \in A_t} (r - d_H(a)) + |B_s|(r - \Delta) \\
        & \geq
        \sum _{a \in A_t} (r - d_H(a)) + 2|A_s|\times \frac{r}{2} \\
        & \geq \sum _{a\in A} (r - d_H(a)) = V.
    \end{align*}
    This covers all cases, and completes the proof.
\end{proof}

We now prove a covering version of Lemma \ref{lemma: building cycles in (l,s)-partitions}. In our proof of Theorem \ref{thm:CoverRandom} we will again break $D \sim {\cal D}(n,p)$ into many sub-digraphs which are distributed similarly to $F$ from Lemma \ref{lemma: building cycles in (l,s)-partitions}. However there will be some small fluctuation in the edge probabilities of edges in these sub-digraphs. The slightly unusual phrasing of the next lemma is intended to allow for these fluctuations.

\begin{lemma}
    \label{lemma: covering edges with cycles in (l,s)-partitions}
    Let ${\mathcal V} = (V_0, V_1,\ldots,V_{\ell})$ be an
    $(\ell ,s)$-partition of a set $X$ of size $n = m\ell  +s$.
    Choose a random subdigraph $F$ of
    $D_n({\mathcal V})$ as follows:
    	\begin{itemize}
    		\item include each interior edge $e$ from $D_n({\cal V})$
    		independently with  probability  
    		 $q_e\in (1\pm o(1))p_{in}$;
    		\item include each exterior edge from $D_n({\cal V})$
    		independently with probability at least $p_{ex}$.
    	\end{itemize}
    Then, provided $p_{in}= \omega( \log n/m)$ and
    $p_{ex} =\omega(\log n/(m+s))$, with probability
    $1-o(1/n^2)$ there are $(1+o(1))mp_{in}$ directed Hamilton cycles in
    $F$ which cover all interior edges of $F \cap D_n({\cal V})$.
\end{lemma}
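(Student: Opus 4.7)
The plan is to adapt the proof of Lemma \ref{lemma: building cycles in (l,s)-partitions}, replacing the step that packs edge-disjoint matchings with one that covers all edges using few (not necessarily disjoint) perfect matchings. First I would expose the interior edges of $F$. For each $j \in [\ell-1]$, view $B_j := E_F(V_j, V_{j+1})$ as a bipartite graph on $V_j \cup V_{j+1}$, with each edge appearing independently with probability in $(1\pm o(1))p_{in}$. By Chernoff plus a union bound, whp every $B_j$ satisfies $\delta(B_j), \Delta(B_j) \in (1\pm o(1))mp_{in}$ together with the pseudo-randomness hypotheses required by Lemma \ref{lemma: completing arbitrary small graph into an r-factor} with parameter $d = (1-o(1))mp_{in}$.

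The main step is to cover each $B_j$ by $L_{cov} := (1+o(1))mp_{in}$ perfect matchings lying inside $B_j$. By a monotone coupling, Lemma \ref{lemma:many-matchings} produces whp $L_1 := (1-o(1))mp_{in}$ edge-disjoint perfect matchings $M_1^j,\ldots,M_{L_1}^j$ of $B_j$. Let $R_j := B_j \setminus \bigcup_i M_i^j$; since $L_1$ edges have been removed at every vertex, $\Delta(R_j) = o(mp_{in})$. Now apply Lemma \ref{lemma: completing arbitrary small graph into an r-factor} with $G = B_j$, $H = R_j$ and $r := \lceil 2\Delta(R_j)\rceil$: both $\Delta(H)\le r/2$ and $r\le d/80$ are satisfied since $r = o(mp_{in})$ while $d = (1-o(1))mp_{in}$. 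This yields $G_j' \subseteq B_j$ edge-disjoint from $R_j$ with $G_j' \cup R_j$ being $r$-regular bipartite, and K\"onig's edge-colouring theorem decomposes it into $r$ perfect matchings $M_{L_1+1}^j,\ldots,M_{L_1+r}^j$, each a subgraph of $B_j$. Combined with the initial $L_1$ matchings, every edge of $B_j$ lies in at least one matching, so $L_{cov} := L_1 + r = (1+o(1))mp_{in}$ perfect matchings of $B_j$ cover all of $B_j$.

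For each $k\in [L_{cov}]$ concatenate $M_k^1,\ldots,M_k^{\ell-1}$ (padding short layers with arbitrary perfect matchings of $B_j$, which is harmless for covering) to build a matching path system $\mathcal P_k$ of $m$ vertex-disjoint directed paths from $V_1$ to $V_\ell$. Finally I would expose the exterior edges of $F$ and form the auxiliary digraph $F(\mathcal M_k, V_0)$ from Definition \ref{def:auxGraph} on $s+m$ vertices. Since every exterior edge of $F$ appears independently with probability $\ge p_{ex}$, this digraph stochastically dominates $\mathcal D(s+m, p_{ex})$; using $p_{ex} = \omega(\log n/(s+m))$, a quantitative form of Frieze's theorem \cite{frieze1988algorithm} gives Hamiltonicity with probability $1 - o(1/n^3)$, and a union bound over $k \le L_{cov}\le n$ yields probability $1 - o(1/n^2)$ that every $\mathcal P_k$ extends via Remark \ref{remark: Ham cycles correspond to Ham cycles} to a Hamilton cycle $C_k$ of $F$. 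Since $\{M_k^j\}_k$ covers $B_j$ for each $j$, the resulting Hamilton cycles $\{C_k\}_k$ cover every interior edge of $F \cap D_n(\mathcal V)$. The key subtlety is to ensure the covering matchings lie inside $B_j$ itself (rather than inside $K_{m,m}$): this is achieved by first peeling off the bulk of $B_j$ as $L_1$ edge-disjoint perfect matchings via Lemma \ref{lemma:many-matchings}, leaving a sparse residual whose low maximum degree is exactly calibrated to fit the $\Delta(H)\le r/2$ hypothesis of the completion lemma.
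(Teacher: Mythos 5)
Your proof is correct and follows essentially the same route as the paper: peel off $(1-o(1))mp_{in}$ edge-disjoint perfect matchings from each layer via Lemma \ref{lemma:many-matchings}, use Lemma \ref{lemma: completing arbitrary small graph into an r-factor} to absorb the sparse residual into an $r$-regular subgraph, decompose that into perfect matchings, and close each resulting matching-path system into a Hamilton cycle through the auxiliary digraph on $V_0$. The paper sets a single uniform $r = \max_{j} 2\Delta(H_j)$ across layers rather than your per-layer $r_j$ with padding, and cites Hall's theorem rather than K\"onig for the decomposition, but these are cosmetic differences; your "stochastically dominates" phrasing for the exterior-edge digraph is in fact slightly more careful than the paper's.
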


\begin{proof}
    We begin by exposing the interior edges of $F$.
    For any $j\in [\ell-1]$, all of
    edges $E_{D_n}(V_j,V_{j+1})$ appear in $E_F(V_j,V_{j+1})$
    independently with probability at least  $(1-o(1))p_{in}$.
    For any $j\in [\ell-1]$, let $F_j$ be the subdigraph of $F$ consists of the vertices $V_j\cup V_{j+1}$ and the edges in $E_F(V_j,V_{j+1}))$.
    We again view $F_j$ as a bipartite graph, simply by ignoring the orientations.
    As in Lemma \ref{lemma: building cycles in (l,s)-partitions},
    with probability $1-o(1/n^2)$ for each $j\in [\ell -1]$ we can find
    $L = (1-o(1))mp_{in}$ edge-disjoint perfect
    matchings in $E_F(V_j,V_{j+1})$, which we denote by
    $\{{\mathcal M}_{j,k}\}_{k=1}^{L}$. Now remove the edges of these
    matchings from $E_F(V_j,V_{j+1})$ and let $H_j$
    denote the remaining subdigraph. Since $p_{in} = \omega (\log n/m)$ and $q\in (1+o(1))p_{in}$,
    by Chernoff's inequality, with probability $1-o(1/n^2)$
    every vertex $u \in V_j$ and $v\in V_{j+1}$
    satisfies
    	$$ (1+o(1))mp_{in}
    		\leq
    	d^+_{F_j}(u), d^-_{F_j}(v)
    		\leq
    	(1+o(1))mp_{in}.$$
    Therefore with probability $1-o(1/n^2)$, for all $j\in [\ell -1]$,
    such $u$ and $v$ satisfy
    	\begin{align}
    		\label{equation: H degree control}
    		d^+_{H_j}(u) = o(mp_{in})
    				\quad
    		\mbox{ and }
    				\quad
    		d^-_{H_j}(v) = o(mp_{in}).
    	\end{align}

    Now given $X \subset V_j$ and $Y \subset V_{j+1}$ we also have
    ${\mathbb E} \big (e_{F_j}(X,Y)\big ) = (1 \pm o(1))|X||Y|p_{in}$.
    Chernoff's inequality therefore shows that
    	\begin{equation*}
    		Pr \big ( \big |e_{F_j}(X,Y) - (1 \pm o(1))|X||Y|p_{in}
    		\big | > t \big ) \leq e^{-t^2 /4|X||Y|p_{in}}.
    	\end{equation*}
    Using this bound it is easy to check that the following holds:
    with probability
    $1 - n^{\omega (1)}$, for all $j\in [\ell -1]$ the hypothesis of Lemma
    \ref{lemma: completing arbitrary small graph into an r-factor}
	are satisfied by the bipartite graph $F_j$,
	taking $d = (1- o(1))mp_{in}$ and $N = m$.
    Setting $r = \max _{j\in [\ell -1]} \{ 2\Delta (H_j)\}$, from
    \eqref{equation: H degree control} we have
    $r \ll d$ for all $j \in [\ell -1]$. Therefore by
    Lemma \ref{lemma: completing arbitrary small graph into an r-factor},
    with probability $1- n^{-\omega (1)}$,
    for all $j\in [\ell -1]$ the graph $F_j$
    contains an $r$-regular subgraph $G_j$ which includes
    all edges of $H_j$.

    Now by Hall's theorem, for each $j\in [\ell -1]$ the digraph
    $G_j$ can be decomposed into $r$ edge-disjoint perfect
    matchings, which we denote by $\{M_{j,k}\}_{k=L+1}^{L+r}$.
	Combined with the matchings at the beginning of the proof, we have
	show that with probability $1 - o(1/n^2)$, for each $j\in [\ell -1]$
	there are perfect matchings $\{{\cal M}_{j,k}\}_{k=1}^{L + r}$
	which cover all interior directed edges of $F$. By combining
	the edges $\{{\mathcal M}_{j,k}\}_{j=1}^{\ell-1}$ for each
	$k\in [L+r]$, we get $m$ directed paths, each directed from
	$V_1$ to $V_{\ell}$ and covering
	$\bigcup _{i=1}^{\ell} V_i$. Let $P_{k,1},\ldots ,
    P_{k,m}$ denote these paths and ${\mathcal P}_k =
    \{P_{k,1},\ldots , P_{k,m}\}$. In particular these paths
    cover all interior edges of $F$.

    Now to complete the proof we expose the exterior edges of $F$
    and use them to complete each ${\mathcal P}_k$ into a directed
    Hamilton cycle as in the proof of Lemma
    \ref{lemma: building cycles in (l,s)-partitions}. For each
    $k\in [L + r]$ let
    $${\mathcal M}_k = \{(u_{k,i},v_{k,i}) \in V_1\times V_{\ell }: P_{k,i}
    \mbox{ is a directed }u_{k,i}-v_{k,i} \mbox{ path}\}.$$
    Now $|V(F({\mathcal M}_k,V_0))| = s + m$ and
    as in Lemma \ref{lemma: building cycles in (l,s)-partitions},
    $F({\mathcal M}_k,V_0)\sim {\mathcal D}(s+m, p_{ex})$.
    Since $p_{ex} =\omega( \log n/(s+m))$, the digraph
    $F({\mathcal M}_k,V_0)$ is Hamiltonian with probability $1-o(1/n^3)$.
    By Remark \ref{remark: Ham cycles correspond to Ham cycles}, this
    shows that with probability $1 - o(1/n^2)$,
    for all $k\in [L + r]$ the digraph $F$ contains a directed Hamilton cycle
    containing all edges of the paths in ${\cal P}_k$. As these paths
    cover all interior edges of $F$, this completes the proof of the lemma.
\end{proof}

\begin{proof}[Proof of Theorem \ref{thm:CoverRandom}]
    The proof follows a similar argument to that of Theorem
    \ref{thm:PackingRandom}.
    Let $\alpha = \alpha (n)$ be some function tending arbitrarily slowly to
    infinity with $n$ and let $p \geq \alpha ^4 \log ^2n/n$.
    Let $n=m\ell + s$ where
    $\ell = \alpha $, $s = n/\alpha $ and $t = \alpha ^2 \log  n$. Note
    that $m = (1+o(1))n/\alpha $.

    Since $t =\omega (\ell \log n)$ we can take
    ${\mathcal V}^{(1)},\ldots ,{\mathcal V}^{(t)}$ to be a
    collection of $(\ell ,s)$-partitions of $X = [n]$
    as
    given by Lemma \ref{lemma: even partition of the edges}.
    To begin, whenever we expose the edges of a directed graph
    $D \sim {\mathcal D}(n,p)$, we will
    assign the edges among $t$ sub-digraphs
    $D^{(1)},\ldots ,
    D^{(t)}$. The digraphs
    $D^{(i)}$ are constructed as follows.
    Let
    \begin{equation*}
        A_e := \{i\in [t]: e \mbox{ is interior in }D_n({\mathcal V}^{(i)})\}.
    \end{equation*}
    By Lemma \ref{lemma: even partition of the edges}, \whp
    for each edge $e$ we have
    $|A_e| = (1+o(1)) t/\ell = (1+o(1) \alpha \log n$.
    Independently for each edge $e$ choose a value $h(e) \in A_e$
    uniformly at random. For each $i\in [t]$, let the digraph
    $D^{(i)} $ contain the edges $\{e\in E(D): h(e) = i\}$. Furthermore,
    adjoin all edges of $D$ which occur as an exterior
    edge of $D_n({\mathcal V}^{(i)})$ to $D^{(i)}$. We will prove that
    w.h.p. $D^{(i)}$ contains directed Hamilton cycles
    covering all the edges of $D \cap D_n({\cal V}^{(i)})$.

    First note that all edges $e$ of
    $D_n({\mathcal V}^{(i)})$ appear independently in
    $D^{(i)}$. If $e$ is an interior edge then the
    probability that it appears is
    $p/|A_e| = (1\pm o(1))p/\alpha \log n = p_{in}$. We see that each
    interior edge
    of $D_n({\mathcal V}^{(i)})$ appears in $D^{(i)}$ independently
    with probability between $(1-o(1))p_{in}$ and $(1+o(1))p_{in}$.
    Also, each exterior edge $e$ in $D_n({\mathcal V}^{(i)})$ appears in
    $D^{(i)}$ with probability $p_{ex}: = p$. Now we have
    $p_{ex} = p =\omega (\frac {\log n}{m+s})$. We also
    have $p_{in} = (1 + o(1))\frac{p\ell }{t} \geq \frac {\alpha ^2 \log n}{n}
    = \frac {\alpha \log n}{m}$,
    so $p_{in} =\omega (\frac {\log n}{m})$. Thus by Lemma
    \ref{lemma: covering edges with cycles in (l,s)-partitions} we obtain
    \begin{align}
        \label{equation: individual coverings with Hamilton cycles}
        \Pr(D^{(i)} \mbox{ has } (1+o(1))mp_{in}
        \mbox{ directed Hamilton cycles covering its interior edges}) \geq
        1 - \frac{1}{n^2}.
    \end{align}
    Summing \eqref{equation: individual coverings with Hamilton cycles}
    over $i\in [t]$, this proves that w.h.p. $D$ contains
    $(1+o(1))mp_{in}t = (1+o(1))np$  Hamilton cycles covering the
    interior edges of $D^{(i)}$ for all $i\in [t]$. Since each edge
    of $D$ occurs as an interior edge of $D^{(i)}$ for some $i \in [t]$, this
    completes the proof of the theorem.
\end{proof}

%
%
%
%
%
%
%
%
%
%
%
%
%
%
%
%
%
%
%
%

\section{Packing Hamilton cycles in pseudo-random directed graphs}

\subsection{Pseudo-random digraphs and Hamiltonicity}\label{sec:HamInPseudo}

\begin{definition} \label{definition:pseudorandom} A directed graph $D$ on $n$ vertices is called $(n,\lambda,p)$-\emph{pseudo-random} if the following hold:
    \begin{description}
        \item[(P1)] $(1 -\lambda)np \leq
        d_D^{+}(v), d_D^{-}(v)\leq (1 +\lambda)np$ for every $v\in V(D)$;
        \item[(P2)] For every $X\subseteq V(D)$ of size $|X|\leq\frac{4\log^8n}{p}$ we have
        $e_D(X)\leq (1-\lambda)|X|\log^{8.02}n$;
        \item[(P3)] For every two disjoint subsets $X,Y\subseteq V(D)$ of sizes
        $ |X|,|Y|\geq \frac{\log^{1.1}n}p$ we have $e_D(X,Y)=
        (1\pm\lambda)|X||Y|p$.
    \end{description}
\end{definition}

The following theorem of Ferber, Nenadov, Noever, Peter
and Skori\'c \cite{ferberrobust} gives a sufficient
condition for pseudo-random digraph to be Hamiltonian.

\begin{theorem}(Theorem 3.2, \cite{ferberrobust})\label{thm:pseudorandom is Hamiltonian}
    Let $0<\lambda<1/10$. Then for $p = \omega(\frac{\log^8
        n}{n})$ the following holds. Let $D$ be a directed graph with
        the following properties:
    \begin{description}
    \item [(P1)] $(1 -\lambda)np \leq
        d_D^{+}(v), d_D^{-}(v)\leq (1 +\lambda)np$ for every $v\in V(D)$;
    \item [(P2)*] for every $X\subseteq V(D)$ of size $|X|\leq \frac{\log^2n}{p}$ we have
    $e_D(X)\leq |X|\log^{2.1}n$;
    \item[(P3)*] for every two disjoint subsets $X,Y\subseteq V(D)$ of sizes
    $ |X|,|Y|\geq \frac{\log^{1.1}n}p$ we have $e_D(X,Y)\leq
    (1+\lambda)|X||Y|p$.
    \end{description}
    Then $D$ contains a Hamilton cycle.
\end{theorem}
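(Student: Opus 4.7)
The plan is to prove Theorem~\ref{thm:pseudorandom is Hamiltonian} by a two-phase argument: first produce a spanning $2$-factor (a collection of vertex-disjoint directed cycles covering $V(D)$) via a bipartite matching argument, and then iteratively merge the cycles of the $2$-factor into a single Hamilton cycle using local edge-swaps whose existence is guaranteed by the pseudo-random conditions.

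For the first phase, I would encode $D$ as a balanced bipartite graph $B$ on vertex parts $V^{+}, V^{-}$, both copies of $V(D)$, with $u^{+}v^{-} \in E(B)$ iff $\overrightarrow{uv} \in E(D)$. By $(P1)$ the minimum degree of $B$ is at least $(1-\lambda)np$, and $(P3)^{*}$ ensures the Hall-type expansion needed to obtain a perfect matching in $B$ (small ``bad'' sets would violate $(P2)^{*}$). Translating the perfect matching back to $D$ gives a spanning $2$-factor $\mathcal{F} = C_1 \cup C_2 \cup \cdots \cup C_k$, where each $C_i$ is a directed cycle and the $C_i$'s partition $V(D)$.

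For the second phase, I would repeatedly perform the following local move: pick two distinct cycles $C, C' \in \mathcal{F}$ with consecutive arcs $\overrightarrow{ab} \in C$ and $\overrightarrow{cd} \in C'$ such that $\overrightarrow{ad}, \overrightarrow{cb} \in E(D)$. Replacing $\{\overrightarrow{ab}, \overrightarrow{cd}\}$ by $\{\overrightarrow{ad}, \overrightarrow{cb}\}$ merges $C$ and $C'$ into one cycle while leaving every other component of $\mathcal{F}$ untouched, reducing $|\mathcal{F}|$ by exactly one. To guarantee the existence of such a swap when $|\mathcal{F}| \geq 2$, note that the set $X$ of heads of arcs in $C$ and the set $Y$ of tails of arcs in $C'$ are disjoint, and by $(P3)^{*}$ the number of arcs from $X$ to the successors of $Y$ along $C'$ is close to $|X||Y|p$, which is large enough to force a pair of arcs of the required ``crossing'' form.

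The main obstacle is dealing with \emph{short} cycles in $\mathcal{F}$: when $|C_i| < \log^{1.1}n/p$, condition $(P3)^{*}$ does not apply directly, and one cannot count crossing arcs in this way. To handle this, I would first use $(P2)^{*}$ to argue that the total number of vertices lying on short cycles is at most $O(\log^{8}n/p \cdot \log^{2.1}n) = o(n)$, and then execute an initial ``absorption'' round in which every short cycle is merged into one fixed long cycle $C^{\star}$ one at a time. For each short cycle $C'$, the required swap along $C^{\star}$ is found by applying $(P1)$ to a single arc of $C'$: the out-neighborhood and in-neighborhood of its endpoints both have size $(1\pm\lambda)np$, and since $|C^{\star}| = (1-o(1))n$ the probability-free pigeon-hole delivers a usable pair of arcs in $C^{\star}$. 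After this absorption step, only long cycles remain, and the generic merge move using $(P3)^{*}$ terminates with a single Hamilton cycle. A final routine check confirms that each swap preserves the structure of a spanning disjoint union of directed cycles, so induction on $|\mathcal{F}|$ closes the argument.
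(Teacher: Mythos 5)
This theorem is imported from \cite{ferberrobust}; the present paper does not prove it, so there is no internal proof to compare against. That said, your proposal contains a gap that is not merely technical. The central difficulty is that the hypotheses (P1), (P2)*, (P3)* supply \emph{no lower bound} on $e_D(X,Y)$ between arbitrary large disjoint sets: (P3)* reads $e_D(X,Y)\leq(1+\lambda)|X||Y|p$, and you are treating it as a two-sided concentration statement ("the number of arcs from $X$ to the successors of $Y$ along $C'$ is close to $|X||Y|p$"). The only lower-bound information available is the degree lower bound in (P1), and combining it with the upper bound of (P3)* via counting does not in general produce the crossing pair of arcs $\overrightarrow{ad},\overrightarrow{cb}$ that your merge move needs, because for $p$ near $\log^{8}n/n$ the out- and in-neighbourhoods involved have size $\Theta(np)=\Theta(\log^{8}n)$, which is far below the threshold $\log^{1.1}n/p$ at which (P3)* even becomes applicable.

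The absorption step for short cycles fails for the same quantitative reason. You need an $a\in C^{\star}$ with $a\in N^{-}_{D}(c^{+})$ and $a^{+}\in N^{+}_{D}(c)$; these are two sets of size roughly $np$ inside $C^{\star}$, related by the shift $a\mapsto a^{+}$, and for small $p$ (say $np^2n=np^2n\to 0$, i.e.\ $(np)^2/n\to 0$) a "probability-free pigeonhole" gives no guarantee that they intersect — generically they do not. The actual proof of this theorem in \cite{ferberrobust} does not go through a $2$-factor-and-merge scheme; it uses a carefully engineered rotation/absorption argument built to cope precisely with the one-sided nature of the density conditions. To salvage your plan you would need, at minimum, to establish a lower-bound analogue of (P3)* for the sets that arise during the merging phase, and the given hypotheses are deliberately too weak to yield one directly.
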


\subsection{Properties of pseudo-random graphs}\label{sec:PropPseudo}

The following lemmas will be useful in the proof of Theorem \ref{thm:PackingPseudoRandom}. We have deferred the proofs to the Appendix. In
these lemmas we assume that $p=\omega(\log^{14}n/n)$, $p'=p/\log^6n$,
$s=\sqrt{n/\alpha p'}$ and $m=s/ \log n$.

\begin{lemma}
\label{lemma:auxiliary2 is pseudorandom}
    Let $D$ be a $(n,\lambda,p)$-pseudo-random digraph
     with $0<\lambda<1$ and $p=\omega(\log^{14}n/n)$.
     We first select a random subdigraph
     $C$ of $D$ by including edges independently with probability
     $q\in(1 \pm o(1)) p'/p$. Then select an $(\ell,s)$-partition of $V(D )$
     given by ${\mathcal V}=(V_0, V_1, \dots , V_\ell)$
     uniformly at random, with  $|V_0|=s$ and $n=m\ell +s$.
    Then with probability
    $1 - o(1/n)$ the following holds:
    for every collection $\M$ of $m$ disjoint pairs from $V_1\times V_\ell$,
    the random digraph $F_0= C({\mathcal M},V_0)$
    satisfies the following properties:
    \begin{enumerate}[$(A)$]
        \item
        $(1 - 3\lambda)(s+m) p' \leq d^{+}_{F_0}(v), d^{-}_{F_0}(v) \leq  (1 + 3\lambda)(s+m) p'$ for every $v \in V(F_0)$,
        \item we have
        $e_{F_0}(X)\leq |X|\log^{2.1}n$ for every $X\subseteq V(F_0)$ of size $|X|\leq  \frac{\log^2 (s+m)}{p'}$,
        \item for every two disjoint subsets $X,Y\subseteq V(F_0)$ of sizes
        $|X|,|Y|\geq \frac{\log^{1.1} (s+m)}{p'}$, we have
        \label{property:edges-X-Y}
        \begin{align*}
            &e_{F_0}(X,Y)\leq(1 + 2\lambda)|X||Y|p'.
        \end{align*}
    \end{enumerate}

\end{lemma}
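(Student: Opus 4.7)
The central observation is that, although $\mathcal M$ is adversarial, it consists of $m$ disjoint pairs from $V_1\times V_\ell$, both sets of size $m$, so $\{w_i\}_{i=1}^m=V_1$ and $\{x_i\}_{i=1}^m=V_\ell$ regardless of how $\mathcal M$ is chosen. Translating Definition \ref{def:auxGraph} accordingly, the out-degree of every $y\in V(F_0)$ equals (up to $\pm 1$ from a possible $u_iu_i$-loop) $|N_C^+(\sigma(y))\cap(V_0\cup V_1)|$, where $\sigma(v)=v$ for $v\in V_0$ and $\sigma(u_i)=x_i$; symmetrically, $d_{F_0}^-(y)=|N_C^-(\tau(y))\cap(V_0\cup V_\ell)|\pm 1$ with $\tau(v)=v$, $\tau(u_i)=w_i$. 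For edge counts, writing a subset of $V(F_0)$ as $X=X_0\cup X'$ with $X_0\subseteq V_0$, $X'\subseteq\mathcal M$, and $X_W=\{w_i:u_i\in X'\}$, $X_T=\{x_i:u_i\in X'\}$, one checks that $e_{F_0}(X)=e_C(X_0\cup X_T,\,X_0\cup X_W)$ with both sides having size $|X|$; an identical relation holds for $e_{F_0}(X,Y)$ between disjoint $X,Y$. Thus each of $(A)$--$(C)$ reduces to a statement about $C$-neighborhoods or $C$-edges among subsets of $V_0\cup V_1\cup V_\ell$, with no remaining dependence on $\mathcal M$.

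For (A) I would first expose $\mathcal V$: since $V_0\cup V_1$ is a uniformly random subset of $V(D)$ of size $s+m$ and (P1) gives $|N_D^+(v)|=(1\pm\lambda)np$, hypergeometric Chernoff (Remark \ref{CheHyper}) shows $|N_D^+(v)\cap(V_0\cup V_1)|=(1\pm\lambda)(s+m)p$ for every $v\in V(D)$ with probability $1-o(1/n)$; this uses that $(s+m)p$ is a large power of $\log n$. Exposing $C$ next, $|N_C^+(v)\cap(V_0\cup V_1)|$ is binomial with mean $(1\pm 2\lambda)(s+m)p'$, and a second Chernoff bound plus union bound (using $(s+m)p'=\omega(\log^4 n)$) yields the two-sided bound required in (A). In-degrees into $V_0\cup V_\ell$ are handled identically.

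For (B) and (C) I would establish the relevant bound for every pair $(A,B)$ of subsets of $V(D)$ of the appropriate size, and then invoke the reduction above. For (B), every admissible $A\cup B$ satisfies $|A\cup B|\le 2|X|\le 2\log^2(s+m)/p'\le 4\log^8 n/p$, so (P2) yields $e_D(A\cup B)\le|A\cup B|\log^{8.02}n$. Since $e_C(A\cup B)\sim\Bin(e_D(A\cup B),q)$ has mean at most $2|X|\log^{2.02}n$, Lemma \ref{Che} gives
\[\Pr\bigl[e_C(A\cup B)>|X|\log^{2.1}n\bigr]\le\bigl(O(\log^{-0.08}n)\bigr)^{|X|\log^{2.1}n},\]
which decisively defeats the $n^{2|X|}$ union bound over $A\cup B$. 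For (C), since $|X|,|Y|\ge\log^{1.1}(s+m)/p'\ge\log^{1.1}n/p$ (using that $s\ge\sqrt n$ and hence $\log(s+m)=\Theta(\log n)$), (P3) gives $e_D(A,B)\le(1+\lambda)|A||B|p$; sub-sampling and Chernoff with exponent $\Omega(\lambda^2|A||B|p')$ beat $\binom{n}{|A|}\binom{n}{|B|}$ because $\min(|A|,|B|)p'\ge\log^{1.1}n$.

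The main delicate point is (B): the chain of exponents $8.02\to 2.02\to 2.1$ leaves only a $\log^{0.08}n$ safety factor in the sub-sampling tail, but this is still more than enough to absorb the $n^{O(|X|)}$ union bound once $n$ is large. Everywhere else the concentration is comfortable, and the remaining work is purely bookkeeping of the random bits in the correct order.
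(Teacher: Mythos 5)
Your proposal is correct and follows essentially the same route as the paper's proof: translate degrees and edge counts in $F_0$ into the corresponding quantities for $C$ among $V_0\cup V_1\cup V_\ell$ (exploiting that $\{w_i\}=V_1$ and $\{x_i\}=V_\ell$ so the translation is independent of $\mathcal M$), then apply (P1)/(P2)/(P3) of Definition \ref{definition:pseudorandom} followed by Chernoff or Lemma \ref{Che} for the sub-sampling step, with union bounds over vertices or set pairs. Your two-step exposure (first $\mathcal V$ via hypergeometric Chernoff, then $C$ via binomial Chernoff) for (A) and your explicit maps $\sigma,\tau$ and the identity $e_{F_0}(X)=e_C(X_0\cup X_T,\,X_0\cup X_W)$ are marginally tidier than the paper's phrasing (which opens each pair to a set $X^*$ with $|X^*|\le 2|X|$), but there is no substantive difference in the argument.
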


\begin{lemma}\label{lemma:bipartite is pseudorandom}
    Let $D$ be $(n,\lambda,p)$ pseudo-random digraph with $0<\lambda<1/4$ and $p=\omega(\log^{14}n/n)$.  Suppose that $V(D)=V_0\cup V_1\cup\dots\cup V_\ell$ is a random $(\ell,s)$-partition of $V(D)$ with
    $|V_0|=s$ and $n=m\ell +s$ and let $F$ be the graph obtained from $D$ by keeping every interior edge with probability $p_{in}= 1/(\alpha\ell\log n) $. Then with probability $1-o(1/n)$, for every $j\in [\ell-1]$ the directed subgraph $F_j=E_F(V_j,V_{j+1})$
    contains
    $(1-4\lambda)mp\cdot p_{in}$ edge disjoint perfect matchings.
\end{lemma}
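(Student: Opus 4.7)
The plan is to show that for every $j \in [\ell-1]$ the bipartite graph $F_j = E_F(V_j,V_{j+1})$, viewed as an undirected graph on parts $V_j, V_{j+1}$, contains an $r$-regular spanning subgraph with $r := (1-4\lambda)mp\cdot p_{in}$. König's edge-coloring theorem then decomposes such a subgraph into $r$ edge-disjoint perfect matchings, and a union bound over $j$ delivers the claimed probability $1 - o(1/n)$. The $r$-factor itself would be constructed via max-flow/min-cut together with the integrality theorem for network flows, exactly as in the proof of Lemma~\ref{lemma: completing arbitrary small graph into an r-factor}: in the standard network with capacity $r$ on all source/sink edges and capacity $1$ on each edge of $F_j$ oriented from $V_j$ to $V_{j+1}$, an $r$-factor exists iff every $s$-$t$ cut has capacity at least $rm$, which (trivial when $|A| \leq |B|$) reduces to verifying $e_{F_j}(A, V_{j+1}\setminus B) \geq r(|A|-|B|)$ for every $A\subseteq V_j$ and $B\subseteq V_{j+1}$.

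I would first establish, with probability $1-o(1/n)$, three auxiliary events holding simultaneously for all $j$: (a) every vertex of $F_j$ has in- and out-degree in $(1\pm 2\lambda)mp\cdot p_{in}$, via (P1), Remark~\ref{CheHyper} for hypergeometric concentration of the number of neighbors landing in a random part, Chernoff for the $p_{in}$-sampling, and a vertex union bound; (b) for every pair of disjoint $X\subseteq V_j$, $Y\subseteq V_{j+1}$ with $|X|,|Y|\geq \log^{1.1}n/p$, $e_{F_j}(X,Y) = (1\pm 2\lambda)|X||Y|p\cdot p_{in}$, via (P3) and Chernoff; (c) for every $Z\subseteq V_j\cup V_{j+1}$ with $|Z|\leq 2\log^{1.1}n/p$, $e_{F_j}(Z\cap V_j, Z\cap V_{j+1}) \leq 3|Z|\log^{8.02}n\cdot p_{in}$, via (P2) and Chernoff. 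Given these events, the cut inequality is verified deterministically by splitting into cases on the sizes: \emph{(i)} if $|A|\geq 3|B|$, the lower bound on $\sum_{a\in A} d^+_{F_j}(a)$ minus the upper bound on $\sum_{b\in B}d^-_{F_j}(b)$ from (a) suffices; \emph{(ii)} if $m-|B|\geq 3(m-|A|)$, the dual decomposition $e_{F_j}(A, V_{j+1}\setminus B) = \sum_{b\in V_{j+1}\setminus B} d^-_{F_j}(b) - e_{F_j}(V_j\setminus A, V_{j+1}\setminus B)$ with the degree bounds from (a) works analogously; \emph{(iii)} in the intermediate regime, with both $|A|, m-|B|\geq \log^{1.1}n/p$, event (b) yields $e_{F_j}(A, V_{j+1}\setminus B) \geq (1-2\lambda)|A|(m-|B|)p\cdot p_{in}$, and the algebraic inequality $(1-2\lambda)|A|(m-|B|)\geq (1-4\lambda)m(|A|-|B|)$ holds for every $|A|<m$; \emph{(iv)} the remaining configurations force either $|A|,|B|\leq \log^{1.1}n/p$ or $m-|A|, m-|B|\leq \log^{1.1}n/p$, where event (c) applied respectively to $Z=A\cup B$ or $Z=(V_j\setminus A)\cup(V_{j+1}\setminus B)$, together with $mp=\omega(\log^{8.02}n)$, renders the P2-based error negligible next to the degree lower bound from (a).

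The main technical point is case \emph{(iii)}, in which the inequality is tight and forces the specific choice of $r$: the $2\lambda$ cushion in event (b) exactly absorbs the algebraic margin in $(1-2\lambda)|A|(m-|B|)\geq (1-4\lambda)m(|A|-|B|)$, explaining why $r$ cannot be appreciably closer to $mp\cdot p_{in}$ with this strategy. One must also verify that the Chernoff failure probability in (b) is strong enough to absorb the $O(4^m)$ union bound over pairs $(X, Y)$; this follows from the lower bound $|X||Y|p\cdot p_{in}\geq (\log^{1.1}n/p)^2 p\cdot p_{in}$ combined with the hypothesis $p=\omega(\log^{14}n/n)$, which yields an exponentially decaying tail with the required slack. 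Once all four cut cases are verified for a typical realization of the partition and the subsampling, the max-flow/min-cut theorem together with the integrality theorem produces the $r$-factor, and König's theorem decomposes it into the required $r$ edge-disjoint perfect matchings.
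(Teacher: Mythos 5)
Your overall strategy matches the paper's: establish three pseudo\-randomness events for each bipartite piece $F_j$, then verify an $r$-factor existence criterion and decompose into $r$ perfect matchings. Whether one phrases the $r$-factor criterion as Gale--Ryser plus Hall/K\"onig (as the paper does) or as max-flow/min-cut plus integrality plus K\"onig (as you do) is cosmetic; they are the same condition.

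However, there is a genuine gap in your event~(b). You take the threshold $\log^{1.1}n/p$ -- the natural threshold at which \textbf{(P3)} controls $e_D(X,Y)$ -- and claim that after subsampling by $p_{in}$ a two-sided concentration $e_{F_j}(X,Y)=(1\pm2\lambda)|X||Y|p\cdot p_{in}$ still holds for all such $X,Y$, via Chernoff and a union bound over $O(4^m)$ pairs. This cannot work and the event is in fact false at the low end of the size range. With the parameters of Section~\ref{sec:PropPseudo}, one has $\ell\approx\sqrt{n\alpha p/\log^4 n}$, hence $p_{in}=1/(\alpha\ell\log n)=\Theta\big(\log n/(\alpha^{3/2}\sqrt{np})\big)$ and $p\cdot p_{in}=\Theta\big(\log n\sqrt{p/n}/\alpha^{3/2}\big)$. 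For $|X|=|Y|=k_0:=\lceil\log^{1.1}n/p\rceil$ one finds
\[
  k_0\,p\cdot p_{in}=\Theta\!\left(\frac{\log^{2.1}n}{\alpha^{3/2}\sqrt{np}}\right)=o(1)
  \quad\text{and hence}\quad k_0^2\,p\cdot p_{in}=o(k_0),
\]
using $np=\omega(\log^{14}n)$. So the expected edge count between two such sets in $F_j$ tends to $0$: almost every such pair $(X,Y)$ will satisfy $e_{F_j}(X,Y)=0<(1-2\lambda)|X||Y|p\cdot p_{in}$, and the Chernoff exponent $\lambda^2k_0^2pp_{in}$ is asymptotically smaller than even $k_0\log m$ (let alone $m$), so no union bound over subsets of size $\geq k_0$ can close the argument. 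The paper instead defines $k=\lceil 24\log n/(\lambda^2pp_{in})\rceil$, precisely so that $kpp_{in}\gtrsim\log n/\lambda^2$ and the exponent $\lambda^2k^2pp_{in}$ dominates $k\log n$; this $k$ is far larger than your $\log^{1.1}n/p$. The same mismatch then infects your case split: case (iii) relies on (b) for sets as small as $\log^{1.1}n/p$, while case (iv) only covers sets \emph{below} $\log^{1.1}n/p$, so you are left with a large uncovered regime once (b) is corrected. To repair the proof, replace the threshold $\log^{1.1}n/p$ in both (b) and (c) (and the corresponding cut-offs in the case analysis) by the paper's $k$; the key deterministic estimate you then need in the ``small'' regime is the paper's observation that if $x\leq k$ and $x+y\geq m$ then $|Y^c|\leq|X|\leq k$, which puts both endpoints in the range where \textbf{(P2)}-style control is available. (As a minor point, your cases (i) and (ii) become redundant once (iii) and (iv) are set up with the right threshold; the algebraic bound $(1-2\lambda)a(m-b)\geq(1-4\lambda)m(a-b)$ you verify already absorbs them.)
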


\subsection{Proof of Theorem \ref{thm:PackingPseudoRandom}}

To prove Theorem \ref{thm:PackingPseudoRandom} we use the following lemma (the analogue of Lemma \ref{lemma: building cycles in (l,s)-partitions}) about the existence of many edge disjoint Hamilton cycles in special pseudo-random directed graphs.

\begin{lemma}
    \label{lemma: building cycles in (l,s)-partitions for pseudorandom}
    Let ${\mathcal V} = (V_0, V_1,\ldots,V_{\ell})$ be an  $(\ell ,s)$-partition
    of a set $X$ of size $n=\ell m+s$,
    chosen uniformly and
    independently at random.
    Let $D$ be an $(n,\lambda,p)$ pseudo-random graph on the vertex set $X$,
    with $p=\omega(\log^{14}n/n)$, and $0<\lambda <1/100$. Suppose that we select  a random subdigraph $F$ of $D(\V)$ as follows:
    \begin{itemize}
    	\item include each interior edge of $D(\V)$ independently with probability $p_{in}$;
    	\item include each exterior edge of $D(\V)$ independently with probability $p_{ex}$.
    \end{itemize}
    Then, provided $p_{in}= (1-o(1))\cdot 1/(\alpha\ell \log n)$ and $p_{ex} =n^2/(\alpha^2 s^2\ell ^2\log n)$, where $p'=p/\log^6n$, $s=\sqrt {n/\alpha p'}$, $m=s/ \log n$ and $\alpha = \alpha (n)$ is some function tending arbitrarily slowly to infinity with $n$, $F$ contains $(1-o(1))(1-4\lambda)mpp_{in}$
    edge-disjoint Hamilton cycles with probability $1-o(1/n)$.
%
\end{lemma}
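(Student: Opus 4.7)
The plan is to mirror the proof of Lemma~\ref{lemma: building cycles in (l,s)-partitions} almost verbatim, replacing the two probabilistic ingredients previously supplied by the independent-edge random graph model by their pseudo-random analogues: (i) the existence of many edge-disjoint perfect matchings between consecutive parts is replaced by Lemma~\ref{lemma:bipartite is pseudorandom}, and (ii) the Hamiltonicity of the auxiliary dense random digraph on $V_0\cup \M_k$ is replaced by combining Lemma~\ref{lemma:auxiliary2 is pseudorandom} with Theorem~\ref{thm:pseudorandom is Hamiltonian}.

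First I would expose the interior edges of $F$ slice by slice. Applying Lemma~\ref{lemma:bipartite is pseudorandom} to $D$ with the stated value of $p_{in}$ gives, with probability $1-o(1/n)$, that each bipartite slice $F_j = E_F(V_j,V_{j+1})$ ($j\in[\ell-1]$) contains $L := (1-4\lambda)mp\cdot p_{in}$ edge-disjoint perfect matchings $\{\M_{j,k}\}_{k=1}^L$. For each $k\in[L]$, concatenating $\{\M_{j,k}\}_{j\in[\ell-1]}$ produces a matching path system $\mathcal P_k = \{P_{k,1},\ldots,P_{k,m}\}$ of $m$ vertex-disjoint directed paths from $V_1$ to $V_\ell$ covering $\bigcup_{j\in[\ell]}V_j$. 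Writing $u_{k,i}\in V_1$ and $v_{k,i}\in V_\ell$ for the endpoints of $P_{k,i}$, set $\M_k = \{(u_{k,i},v_{k,i}): i\in[m]\}$.

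I would then distribute the exterior edges uniformly at random: for each exterior edge $e$ of $D_n(\V)$ independently choose $h(e)\in[L]$, expose the exterior edges of $F$, and let $H_k$ be the subgraph of $F$ formed by those exterior edges with $h(e)=k$. Writing $C_k$ for the digraph consisting of the paths in $\mathcal P_k$ together with $H_k$, each exterior edge of $D_n(\V)$ then lies in $C_k$ independently with probability $p_{ex}/L$; using $s^2 = n/(\alpha p')$, $m = s/\log n$ and $\ell m = n-s$, a direct calculation shows $p_{ex}/L = (1+o(1))\,p'/p$, which is exactly the range of $q$ allowed in Lemma~\ref{lemma:auxiliary2 is pseudorandom}. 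By Remark~\ref{remark: Ham cycles correspond to Ham cycles} it suffices to find, for each $k$, a directed Hamilton cycle in the contracted auxiliary digraph $C_k(\M_k,V_0)$ on $s+m$ vertices. The joint distribution of $C_k(\M_k,V_0)$ is precisely that handled by Lemma~\ref{lemma:auxiliary2 is pseudorandom}, and so with probability $1-o(1/n)$ this digraph satisfies properties $(P1)$, $(P2)^*$ and $(P3)^*$ of Theorem~\ref{thm:pseudorandom is Hamiltonian} with density parameter $p'$. Since $p' = p/\log^6 n = \omega(\log^8(s+m)/(s+m))$, Theorem~\ref{thm:pseudorandom is Hamiltonian} produces a Hamilton cycle in $C_k(\M_k,V_0)$, which pulls back to a Hamilton cycle of $F$ through the contraction. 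A union bound over the polynomially many indices $k\in[L]$ then yields, with probability $1-o(1/n)$, the desired $L = (1-o(1))(1-4\lambda)mpp_{in}$ edge-disjoint Hamilton cycles.

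The main obstacle, and the reason for the polylog factor in $p=\omega(\log^{14}n/n)$, is verifying that the auxiliary digraph $C_k(\M_k,V_0)$ inherits enough pseudo-randomness from $D$ to satisfy the hypotheses of Theorem~\ref{thm:pseudorandom is Hamiltonian}. One must control degrees and edge counts between every pair of subsets through two layers of randomness (the random subsampling of edges of $D$ and the random $(\ell,s)$-partition) and, crucially, uniformly over every admissible choice of $\M_k$, since $\M_k$ is determined by the interior edges already exposed in step one. Encapsulating this uniform-in-$\M_k$ control is exactly the content of Lemma~\ref{lemma:auxiliary2 is pseudorandom}; once it is granted, the lemma reduces to the bookkeeping argument above.
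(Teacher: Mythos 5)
Your proposal mirrors the paper's proof essentially verbatim: expose interior edges, invoke Lemma~\ref{lemma:bipartite is pseudorandom} to obtain the $L=(1-4\lambda)mp\cdot p_{in}$ edge-disjoint matching path systems, randomly partition the exterior edges into $L$ classes, verify $p_{ex}/L=(1\pm o(1))p'/p$, and then feed the auxiliary contracted digraphs through Lemma~\ref{lemma:auxiliary2 is pseudorandom} and Theorem~\ref{thm:pseudorandom is Hamiltonian} before union-bounding over $k$. This is exactly the argument in the paper, including the correct identification of the uniform-in-$\M_k$ control as the crux handled by Lemma~\ref{lemma:auxiliary2 is pseudorandom}.
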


\begin{proof}
    To begin, look at the interior edges of $F$.
    For $j\in [\ell-1]$ all
    edges of $E_D(V_j,V_{j+1})$ appear in $F$ independently with probability $p_{in}$. Lemma \ref{lemma:bipartite is pseudorandom} therefore gives that
     with probability $1-o(1/t)$,
    $E_F(V_j,V_{j+1})$ contains
    $L:=(1-4\lambda)mpp_{in}$ edge-disjoint perfect matchings
    $\{\mathcal M_{j,k}\}_{k=1}^{L}$ for all $j\in [\ell-1]$.
    For each $k\in [L]$, taking the union of the edges
    in the matchings
    $\bigcup _{j=1}^{\ell-1} {\mathcal M}_{j,k}$ gives $m$
    directed paths, each directed from $V_1$ to $V_{\ell}$ and
    covering $\bigcup _{i=1}^{\ell} V_i$. Let $P_{k,1},\ldots ,
    P_{k,m}$ denote these paths and ${\mathcal P}_k =
    \{P_{k,1},\ldots , P_{k,m}\}$.

    Now assign to each exterior edge $e$ of $D({\mathcal V})$ a value
    $h(e) \in [L]$ chosen uniformly at random, all values  chosen
    independently. Look at the exterior edges of $F$ and for
    each $i\in [L]$ let $H_i$ denote the subgraph of $F$
    with edge set
    $\{e \in E(F) : e \mbox{ exterior with } h(e) = i \}$.

    \begin{claim} For any $k\in [L]$, the digraph
    $C_k:=\{\vec{e}\ :\ \vec{e}\ is\ a\ directed\ edge\ of\ some\ path\ is\ \mathcal P_k\} \cup H _k$ contains a directed Hamilton
    cycle with probability $1 - o(1/n)$.
    \end{claim}
    Note the proof of the lemma immediately follows from the claim,
    summing over $k\in [L]$.

    To prove the claim, let
    $${\mathcal M}_k = \{(u_{k,i},v_{k,i}) \in V_1\times V_{\ell }: P_{k,i}
    \mbox{ is a }u_{k,i}-v_{k,i} \mbox{ directed path}\}.$$

 By Remark \ref{remark: Ham cycles correspond to Ham cycles} it
    suffices to prove that the auxiliary digraph ${C}_k({\mathcal M}_k,V_0)$ contains a
    directed Hamilton cycle.
    Now note that $|V({C}_k(\M_k,V_0))| = s + m$. We now wish to prove that with probability $1-o(1/n)$ every $C_k(\M_k,V_0)$ is Hamiltonian.
    Observe that each $C_k(\M_k,V_0)$ was created from $F(\M_k,V_0)$ by keeping each edge $e$ with probability at least
        \begin{equation*}
            p_{ex}/|L| = \frac{n^2}{\alpha ^2 s^2 \ell ^2 \log n} \times
            \frac {1}{(1-o(1))mp p_{in} }.
        \end{equation*}
    Using that $p_{in} = (1-o(1))1/(\alpha \ell \log n)$,  $p ' = n/\alpha s^2$ and that $m \ell = (1-o(1))n$ gives that $p_{ex}/|L| = (1-o(1))p'/p$.

        By applying Lemma \ref{lemma:auxiliary2 is pseudorandom}, we see
        that ${ C}_k({\mathcal M}_k,V_0)$ satisfies properties $(A)$, $(B)$ and
        $(C)$ with probability $1- o(1/n)$. But (A), (B) and (C)
        give properties $(P1)$, $P(2)^*$ and $P(3)^*$ from
        Theorem \ref{thm:pseudorandom is Hamiltonian}, taking $p'$ in place
        of $p$. Since $p' = (1-o(1))p/\log ^6n = \omega (\log ^8n/n)$, by
        Theorem \ref{thm:pseudorandom is Hamiltonian} any such
        ${C}_{k}({\mathcal M}_k,V_0)$ are Hamiltonian. But if
        ${C}_k({\mathcal M}_k,V_0)$ is Hamitonian, then so
        is ${ C}_k$. Thus, this proves that ${ C}_k$ is
        Hamiltonian with probability $1 - o(1/n)$.
\end{proof}

Now we are ready to prove Theorem \ref{thm:PackingPseudoRandom}.

\begin{proof}[Proof of Theorem \ref{thm:PackingPseudoRandom}]
    Let $\alpha = \alpha (n)$ be some function tending arbitrarily slowly to infinity with $n$.
    Let $n=m\ell + s$ where
    $m=s/\log n$ and $s = \sqrt{n/\alpha p'}$.

    Let ${\mathcal V}^{(1)},\ldots ,{\mathcal V}^{(t)}$ be a collection of  $(\ell ,s)$-partitions
    of $X=[n]$, where ${\mathcal V}^{(i)} = (V^{(i)}_0,\ldots ,V^{(i)}_{\ell})$,
    chosen uniformly and
    independently at random and $t = \alpha\ell^2\log n$.
    We will
    assign the edges of ${D}$ among $t$ edge
    disjoint subdigraphs ${ D}^{(1)},\ldots ,
    { D}^{(t)}$ such that each $D^{(i)}$ preserves some pseudo-random properties. The digraphs
    ${ D}^{(i)}$ are constructed as follows.
    Let
    \begin{equation*}
        A_e := \{i\in [t]: e \mbox{ is interior in }D({\mathcal V}^{(i)})\};
        \qquad
        B_e := \{i\in [t]: e \mbox{ is exterior in }D({\mathcal V}^{(i)})\}.
    \end{equation*}
    By Lemma \ref{lemma: even partition of the edges}, \whp for each edge $e$ we
    have $|A_e| = (1+o(1)) \frac {t}{\ell}$ and $|B_e| = (
    (1 + o(1)) \frac {s^2t}{n^2}$. We will now show that there exists a function $f$, $f(e) \in A_e \cup B_e$,  such that if ${ D}^{(i)} $ is the
    digraph
    given by ${ D}^{(i)} = \{e: f(e) = i\}$, then $D^{(i)}$ contains $L:=(1-o(1))np/t$ directed Hamilton cycles. Clearly, this will complete the proof.

    For each edge $e$ choose a random value $f(e) \in A_e \cup B_e$
    where each element in $A_e$ is selected with probability
    $(1-1/\alpha)/|A_e|$
    and each element in $B_e$ is selected with probability $1/\alpha |B_e|$.
    For each $i\in [t]$, we take ${ D}^{(i)} $ to be the
    digraph
    given by ${ D}^{(i)} = \{e: f(e) = i\}$. First note that all edges $e$ of
    ${D}({\mathcal V}^{(i)})$ appear independently in
    ${D}^{(i)}$. If $e\in E(D)$ is an interior edge then the
    probability that it appears is
    $(1-1/\alpha )/|A_e| \geq (1- o(1))\ell/t = (1-o(1))1/\alpha \ell \log n:= p_{in}$,
    since $t = \alpha \ell ^2 \log n$.
    Similarly, each exterior edge $e$ in
    $D\cap D({\mathcal V}^{(i)})$ appears in ${\mathcal D}^{(i)}$
    with probability
    $1/\alpha |B_e| \geq (1-o(1))n^2/\alpha ts^2 =
    (1-o(1)) n^2/\alpha ^2 \ell ^2 s^2 \log n   := p_{ex}$.

    Now note that the conditions of Lemma
    \ref{lemma: building cycles in (l,s)-partitions for pseudorandom} are satisfied with
    these values (with $D^{(i)}$ in place of $F$), so  with probability $1-o(1/n)$,  ${D}^{(i)}$ contains
    $L = (1-o(1))(1-4\lambda)mpp_{in}$  edge disjoint Hamilton cycles. Therefore 	with probability $1 - o(1)$, ${D}^{(i)}$ contains
    $L$ edge disjoint Hamilton cycles for each $i\in [t]$. Fix a choice
    of ${\mathcal V}^{(1)},
    \ldots ,{\mathcal V}^{(t)}$ and $f$ such that this holds. Using
    that $p_{in}t = \ell$ this gives that
    ${D}$ contains $(1-o(1))Lt =
    (1-4\lambda - o(1))mpp_{in}t = (1-4\lambda - o(1)) mp \ell \geq (1-5\lambda)np$ edge-disjoint Hamilton cycles, as required.
\end{proof}

 \noindent {\bf Acknowledgment.} The authors would like to thank the referee of the paper for his careful reading and many helpful remarks.

%
%
%
%
%
%
%
%
%
%
%
%
%
%
%
%
%
%
%
%

    \bibliographystyle{abbrv}

    \bibliography{PackCounAndCove}

\begin{thebibliography}{10}

\bibitem{alon2004probabilistic}
N.~Alon and J.~H. Spencer.
\newblock {\em The probabilistic method}.
\newblock John Wiley \& Sons, 2004.

\bibitem{ben2011resilience}
S.~Ben-Shimon, M.~Krivelevich, and B.~Sudakov.
\newblock On the resilience of {H}amiltonicity and optimal packing of
  {H}amilton cycles in random graphs.
\newblock {\em SIAM Journal on Discrete Mathematics}, 25(3):1176--1193, 2011.

\bibitem{bollobas1984evolution}
B.~Bollob{\'a}s.
\newblock The evolution of random graphs.
\newblock {\em Transactions of the American Mathematical Society},
  286(1):257--274, 1984.

\bibitem{bollobas1985matchings}
B.~Bollob{\'a}s and A.~M. Frieze.
\newblock On matchings and {H}amiltonian cycles in random graphs.
\newblock {\em North-Holland Mathematics Studies}, 118:23--46, 1985.

\bibitem{csaba2013proof}
B.~Csaba, D.~K{\"u}hn, A.~Lo, D.~Osthus, and A.~Treglown.
\newblock Proof of the 1-factorization and {H}amilton decomposition
  conjectures.
\newblock {\em Memoirs of the American Mathematical Society}, to appear.

\bibitem{cuckler2007hamiltonian}
B.~Cuckler.
\newblock {H}amiltonian cycles in regular tournaments.
\newblock {\em Combinatorics, Probability and Computing}, 16(02):239--249,
  2007.

\bibitem{cuckler2009hamiltonian}
B.~Cuckler and J.~Kahn.
\newblock {H}amiltonian cycles in dirac graphs.
\newblock {\em Combinatorica}, 29(3):299--326, 2009.

\bibitem{dirac1952some}
G.~A. Dirac.
\newblock Some theorems on abstract graphs.
\newblock {\em Proceedings of the London Mathematical Society}, 3(1):69--81,
  1952.

\bibitem{egorychev1981solution}
G.~Egorychev.
\newblock The solution of {V}an der {W}aerden's problem for permanents.
\newblock {\em Advances in Mathematics}, 42(3):299--305, 1981.

\bibitem{falikman1981proof}
D.~I. Falikman.
\newblock Proof of the {V}an der {W}aerden conjecture regarding the permanent
  of a doubly stochastic matrix.
\newblock {\em Mathematical Notes}, 29(6):475--479, 1981.

\bibitem{ferber2012counting}
A.~Ferber, M.~Krivelevich, and B.~Sudakov.
\newblock Counting and packing {H}amilton cycles in dense graphs and oriented
  graphs.
\newblock {\em Journal of Combinatorial Theory, Series B, to appear}.

\bibitem{ferber2014packing}
A.~Ferber, G.~Kronenberg, F.~Mousset, and C.~Shikhelman.
\newblock Packing a randomly edge-colored random graph with rainbow $ k $-outs.
\newblock {\em arXiv preprint arXiv:1410.1803}, 2014.

\bibitem{ferberrobust}
A.~Ferber, R.~Nenadov, U.~Peter, A.~Noever, and N.~{\v{S}}koric.
\newblock Robust {H}amiltonicity of random directed graphs.
\newblock {\em SODA '14}.

\bibitem{frieze2008two}
A.~Frieze and M.~Krivelevich.
\newblock On two {H}amilton cycle problems in random graphs.
\newblock {\em Israel Journal of Mathematics}, 166(1):221--234, 2008.

\bibitem{frieze1992counting}
A.~Frieze and S.~Suen.
\newblock Counting {H}amilton cycles in random directed graphs.
\newblock {\em Random Structures and algorithms}, 3:235--242, 1992.

\bibitem{frieze1988algorithm}
A.~M. Frieze.
\newblock An algorithm for finding {H}amilton cycles in random directed graphs.
\newblock {\em Journal of Algorithms}, 9(2):181--204, 1988.

\bibitem{glebov2013number}
R.~Glebov and M.~Krivelevich.
\newblock On the number of {H}amilton cycles in sparse random graphs.
\newblock {\em SIAM Journal on Discrete Mathematics}, 27(1):27--42, 2013.

\bibitem{glebov2014covering}
R.~Glebov, M.~Krivelevich, and T.~Szab{\'o}.
\newblock On covering expander graphs by {H}amilton cycles.
\newblock {\em Random Structures \& Algorithms}, 44(2):183--200, 2014.

\bibitem{hefetz2014optimal}
D.~Hefetz, D.~K{\"u}hn, J.~Lapinskas, and D.~Osthus.
\newblock Optimal covers with {H}amilton cycles in random graphs.
\newblock {\em Combinatorica}, 34(5):573--596, 2014.

\bibitem{robustlyHamDigraphs}
D.~Hefetz, A.~Steger, and B.~Sudakov.
\newblock Random directed graphs are robustly {H}amiltonian.
\newblock {\em arXiv preprint arXiv:1404.4734}, 2014.

\bibitem{janson1994numbers}
S.~Janson.
\newblock The numbers of spanning trees, {H}amilton cycles and perfect
  matchings in a random graph.
\newblock {\em Combinatorics, Probability and Computing}, 3(01):97--126, 1994.

\bibitem{janson2011random}
S.~Janson, T.~Luczak, and A.~Rucinski.
\newblock {\em Random graphs}, volume~45.
\newblock John Wiley \& Sons, 2011.

\bibitem{karp1972reducibility}
R.~M. Karp.
\newblock {\em Reducibility among combinatorial problems}.
\newblock Springer, 1972.

\bibitem{knox2013edge}
F.~Knox, D.~K{\"u}hn, and D.~Osthus.
\newblock Edge-disjoint {H}amilton cycles in random graphs.
\newblock {\em Random Structures \& Algorithms}, 2013.

\bibitem{krivelevich2012number}
M.~Krivelevich.
\newblock On the number of {H}amilton cycles in pseudo-random graphs.
\newblock {\em The Electronic Journal of Combinatorics}, 19(1):P25, 2012.

\bibitem{krivelevich2012optimal}
M.~Krivelevich and W.~Samotij.
\newblock Optimal packings of {H}amilton cycles in sparse random graphs.
\newblock {\em SIAM Journal on Discrete Mathematics}, 26(3):964--982, 2012.

\bibitem{kuhn2013hamilton}
D.~K{\"u}hn and D.~Osthus.
\newblock {H}amilton decompositions of regular expanders: a proof of kelly’s
  conjecture for large tournaments.
\newblock {\em Advances in Mathematics}, 237:62--146, 2013.

\bibitem{kuhn2014hamilton}
D.~K{\"u}hn and D.~Osthus.
\newblock {H}amilton decompositions of regular expanders: applications.
\newblock {\em Journal of Combinatorial Theory, Series B}, 104:1--27, 2014.

\bibitem{lovaszproblems}
L.~Lov\'{a}sz.
\newblock {\em Combinatorial Problems and Exercises}.
\newblock Akad\'{e}miai Kiad\'{o} and North-Holland, 2nd edition, 1993.

\bibitem{mcdiarmid1980clutter}
C.~McDiarmid.
\newblock Clutter percolation and random graphs.
\newblock In {\em Combinatorial Optimization II}, pages 17--25. Springer, 1980.

\bibitem{nash1970hamiltonian}
C.~S.~J. Nash-Williams.
\newblock {H}amiltonian lines in graphs whose vertices have sufficiently large
  valencies.
\newblock {\em Combinatorial theory and its applications, III, North-Holland},
  pages 813--819, 1970.

\bibitem{posa}
L.~P{\'o}sa.
\newblock {H}amiltonian circuits in random graphs.
\newblock {\em Discrete Mathematics}, 14(4):359--364, 1976.

\bibitem{sarkozy2003number}
G.~N. S{\'a}rk{\"o}zy, S.~M. Selkow, and E.~Szemer{\'e}di.
\newblock On the number of {H}amiltonian cycles in dirac graphs.
\newblock {\em Discrete Mathematics}, 265(1):237--250, 2003.

\bibitem{szele1943kombinatorikai}
T.~Szele.
\newblock Kombinatorikai vizsgalatok az iranyitott teljes graffal kapcsolatban.
\newblock {\em Mat. Fiz. Lapok}, 50:223--256, 1943.

\bibitem{thomassen1985hamilton}
C.~Thomassen.
\newblock {H}amilton circuits in regular tournaments.
\newblock {\em North-Holland Mathematics Studies}, 115:159--162, 1985.

\bibitem{tillson1980hamiltonian}
T.~W. Tillson.
\newblock A {H}amiltonian decomposition of ${K}^*_{2m}$, $2m\geq 8$.
\newblock {\em Journal of Combinatorial Theory, Series B}, 29(1):68--74, 1980.

\bibitem{vanLintWilson}
J.~H. van Lint and R.~M. Wilson.
\newblock {\em A course in combinatorics}.
\newblock Cambridge University Press, 2001.

\bibitem{wright1973many}
E.~Wright.
\newblock For how many edges is a digraph almost certainly {H}amiltonian?
\newblock {\em Proceedings of the American Mathematical Society},
  41(2):384--388, 1973.

\end{thebibliography}

\newpage

%
%
%
%
%
%
%
%
%
%
%
%
%
%
%
%
%
%
%
%

\section*{Appendix}

\begin{proof}[Proof of Lemma \ref{lemma:auxiliary2 is pseudorandom}.]
    Recall that $p=\omega(\log^{14}n/n)$ and $p'=p/\log^6n$,
    $s=\sqrt {n/\alpha p'}$, $m=s/\log n$, where $\alpha = \alpha (n)$ is
    some function tending arbitrarily slowly to infinity with $n$.

    We will first prove (A). Note that  for
    any choice of ${\mathcal M}$, and any vertex $v\in V(F_0)$ we have
    $d^+_{F_0}(v) = |N^+_{F_0}(u) \cap V_1| + |N^+_{F_0}(u) \cap V_0|$
    for some $u\in V(D)$. Similarly $d^-_{F_0}(v) =
    |N^-_{F_0}(w) \cap V_{\ell }| + |N^-_{F_0}(w) \cap V_0|$
    for some $w\in V(D)$. Let us thus estimate  $|N^-_{F_0}(v) \cap V_{i}|$ for $i\in \{0,1,\ell\}$ and for every $v\in V(D)$.
    Recall that  $|N^{\pm}_D(v)| = (1\pm \lambda )np$ by Definition \ref{definition:pseudorandom} \textbf{(P1)}.
    As edges remain independently with probability $(1-o(1))p'/p$ and  ${\mathcal V}$ is chosen uniformly at random,
    for every vertex $v\in V(D)$ we have that
    $${\mathbb E}(|N^{\pm}_{F_0}(v) \cap V_i|) = (1-o(1))\frac{p'}{p} \frac {|V_i|}{n}
    |N^{\pm}_D(v)| = (1 \pm \lambda \pm o(1))|V_i|p' = (1 \pm 2\lambda )|V_i|p',$$
    But then by Chernoff's inequality
    we have that
    \begin{equation}
        \label{equation: degree control in the reduced graph}
        {\Pr}(|N^{\pm}_{F_0}(v) \cap V_i| \notin
        (1 \pm 3\lambda )|V_i|p') \leq 2 e^{-\frac {\lambda ^2 |V_i|p'}{3}}
        \leq 2e^{-\frac {\lambda ^2 |V_i|p'}{4}}
        \leq 2e^{- \frac{\lambda ^2 \log ^2n}{4}} = o(1/n^3).
    \end{equation}
    The second last inequality holds since $|V_i|p' \geq mp' =
    \frac {sp'}{\log n} = \sqrt {\frac {np}{\alpha \log ^6n} }\cdot
    \frac {1}{\log n} \gg \sqrt \frac {\log ^{14} n}{\alpha \log ^6 n}
    \frac {1}{\log n} >\log^2n$.
    By
        \eqref{equation: degree control in the reduced graph}
        this gives
    that with probability $1 - o(1/n)$ we have  $d^{\pm}_{F_0}(v) =
    (1 \pm 3\lambda )(s+m)p'$ for all $v\in V(F_0)$, as required.

    To see (B), note that for any ${\mathcal M}$, each set $X \subset V(F_0)$
    corresponds to a set $X^* \subset V(D)$ with $|X^*| \leq 2 |X|$, obtained
    by `opening the pairs of $X$', i.e. $X^* = (X\setminus \mathcal M)
    \cup \{s_i,t_i: (s_i,t_i) \in X\}$. Thus to prove (B) it suffices to show
    that with probability $1 - o(1/n)$, every set $X^* \subset V(D)$ with
    $|X^*| \leq \frac {2\log ^2(s+m)}{p'}$ satisfies $e_{F_0}(X^*)
    \leq \frac {|X^*| \log ^{2.1}n}{2}$.

    Now for $|X^*| \leq \frac {2 \log ^2(s+m)}{p'}$ since $p' = (1 \pm o(1))
    \frac{p}{ \log ^6n}$, we have $|X^*| \leq \frac {4 \log ^8 n}{p}$.
    From Definition
    \ref{definition:pseudorandom} {\textbf{(P2)}} we have
    $$e_{D}(X^*)
    \leq (1-\lambda)|X^*|\log^{8.02}n\leq \frac{(1-\lambda)|X^*|
    \log^{8.05}n}{2}.$$
    We now want to estimate $e_{F_0}(X^*)$. Since in $F_0$, each edge from $D$ is included independently
    with probability $(1 \pm o(1))p'/p = \frac{(1 \pm o(1))}{\log ^6 n}$.
    By Lemma \ref{Che}, since
    ${\mathbb E}(e_{F_0}(X^*)) = (1 \pm o(1))e_{D }(X^*)p'/p
    \leq |X^*|\log ^{2.05}n /2$ we have that
        \begin{align*}
            {\Pr}\left(e_{F_0}(X^*) > \frac{|X^*| \log ^{2.1}(s+m)}{2}\right)
            & \leq
            \left(\frac{2e\cdot  e_{D}(X^*)p'/p}{|X| \log ^{2.1}(s+m)}\right)^{|X| \log ^{2.1}(s+m)} \\
       & \leq \left(\frac{40}{\log ^{0.05} n}\right)^{|X| \log ^{2.1}(s+m)}
        \end{align*}
   The final inequality here holds as $\log (s + m) \geq \log n/3$ for
   $s \geq \sqrt n/\alpha (n) \geq n^{1/3}$.
   But there are $\binom {n}{x} \leq e^{x \log n}$ sets of size $|X^*| = x$.
   Therefore
   \begin{equation*}
        {\Pr}\left(e_{F_0}(X^*) > |X^*| \log ^{2.1}(s+m)/2 \mbox{ for some } X^*\right) \leq \sum _{x = 1}^{m +s} e^{x \log n} \Big (\frac{40}{\log ^{0.05} n} \Big )^{x \log ^{2.1}(s+m)}
        = O \Big (\frac{1}{n^2} \Big ).
   \end{equation*}
   This completes the proof of (B).

    To prove $(C)$, first note that by averaging it suffices to prove
    $(C)$ when $X,Y\subseteq V(F_0)$ are two disjoint subsets
    with $|X|,|Y| = k= \lceil \frac{\log^{1.1} n}{p'} \rceil $.
    Given any choice of ${\mathcal M}$ and such sets $X$ and $Y$, let
        $X^*=\left(X\setminus \M\right)\cup\{t_i\mid \ (s_i,t_i)\in X\}$ and
        $Y^*=\left(Y\setminus \M\right)\cup\{s_i\mid\ (s_i,t_i)\in Y\}$.
    Note that $|X|= |X^*|$ and $|Y|=|Y^*|$ and
    from \textbf{(P3)} of Definition \ref{definition:pseudorandom} we have
    $e_{F_0}(X,Y)= e_{F_0}(X^*,Y^*)$. Thus to prove $(C)$ for all
    choices of ${\mathcal M}$, it suffices
    to prove that with probability $1 - o(1/n)$, we have
    $e_{F_0}(X^*,Y^*) \leq (1+ 2\lambda)|X^*||Y^*|p'$ for all disjoint sets
    $X^*, Y^* \subset V({F_0})$ with $|X^*| = |Y^*| = k$.

    To see this, note that for such $X^*, Y^*$, from Definition
    \ref{definition:pseudorandom} {\textbf(P3)} we have
    $e_{D} (X^*,Y^*) = (1 \pm \lambda )|X^*||Y^*|p$. This gives that
    ${\mathbb E}(e_{F_0 }(X^*,Y^*)) =
    (1 \pm \lambda \pm o(1))|X^*||Y^*|p'$ and by Chernoff's inequality we find
    $${\Pr}\left(e_{F_0 }(X^*,Y^*)\right) >
    (1 + 2\lambda )|X^*||Y^*|p') \leq e^{ - \frac{\lambda ^{2}|X^*||Y^*|p'}{6}} = e^{- \lambda ^2 k^2 p'/6}.$$
    Thus the probability that $e_{F_0}(X^*,Y^*)>(1 + 2\lambda)|X||Y|p'$ for some
    such pair is at most
    \begin{align*}
        \binom {n}{k}^2 e^{-\frac {\lambda ^2 k^2 p'}{6}}
        \leq (n e^{-\frac {\lambda ^{2} kp'}{6}})^k = o(1/n),
    \end{align*}
    where the final equality holds by choice of $k$. This completes the proof
    (C).
\end{proof}

We now give the proof of the Lemma \ref{lemma:bipartite is pseudorandom} stated in Section \ref{sec:PropPseudo}.

\begin{proof}[Proof of Lemma \ref{lemma:bipartite is pseudorandom}.]
    To prove the lemma, we will first show that with probability $1-o(1/n)$, for every $j\in [\ell -1]$ the digraph $F_j$ satisfies the following properties:
    \begin{enumerate}[$(i)$]
        \item $e_{F_j}(X,Y)=
        (1\pm 2\lambda)|X||Y|p\cdot p_{in}$ for every two subsets $X\subseteq V_j$ and $Y\subseteq V_{j+1}$ with
        $ |X|,|Y|\geq k = \lceil \frac{24\log n}{\lambda ^{2} pp_{in}} \rceil $,
        \item $e_{F_j }(X,Y) \leq \min\{|X|, |Y|\}\log ^{2.05}n$ for all $X \subset V_j$ and $Y \subset V_{j+1}$
        with $|X|, |Y| \leq k$,
        \item $d^{\pm}(v,V_{j+1})\geq (1-2\lambda)mp\cdot p_{in}$ for every $v\in V_j$.
    \end{enumerate}
    We first prove (\emph{i}). First note that by an easy averaging argument, it suffices to prove this for all such sets $X$ and $Y$ with $|X| = |Y| = k$. Now as $D$ is  $(n,\lambda,p)$ pseudo-random and $k \geq \log ^{1.1}n/p$, from property \textbf{(P3)} of Definiton
    \ref{definition:pseudorandom} we have $e_{D}(X,Y)=
    (1\pm \lambda)|X||Y|p$ for every such $X$ and $Y$. Let $N_{X,Y}$ be the number of edges in $F_j [X,Y]$. Then $N_{X,Y}\sim \Bin(e_{D}(X,Y),p_{in})$ and thus ${\mathbb E}(N_{X,Y}) = e_{D}(X,Y)p_{in} \geq (1-\lambda )|X||Y|p\cdot p_{in}$. By Chernoff's inequality,
    $$   \Pr \left(N_{X,Y}\notin (1\pm\lambda) e_{D}(X,Y)p_{in}\right)\leq e^{-\frac {\lambda^2}3(1- \lambda)|X||Y|p\cdot p_{in}}\leq e^{-\frac {\lambda^2k^2pp_{in}}{6}}.$$
    By a union bound, this gives that
    \begin{align*} \Pr \left(N_{X,Y}\notin (1\pm\lambda) e_{D}(X,Y)p_{in} \mbox{ for some pair } X \mbox{ and } Y \right)& \leq \binom {m}{k}^2 e^{-\frac {\lambda^2}3(1- \lambda)|X||Y|p\cdot p_{in}} \leq n^{2k}e^{-\frac {\lambda^2k^2pp_{in}}{6}}\\
    & = (ne^{-\frac{\lambda ^2 k p p_{in}}{12}})^{2k} = o(1/n).
    \end{align*}
    The final equality here holds by the definition of $k$.

    Property (\emph{ii}) holds immediately from property \textbf{(P2)} in Definition \ref{definition:pseudorandom}.

    We now show (\emph{iii}). From \textbf{(P1)} of Definition \ref{definition:pseudorandom} we have that $d^{\pm}_D(v)=(1\pm\lambda)np$. Since for each $j\in [\ell-1]$ the set $V_{j+1}$ is chosen uniformly at random, the degree of $v$ in $V_{j+1}$ is distributed according to the hypergeometric distribution with parameters $n,\ d^{\pm}_D(v) ,\ |V_{j+1}|$. By Chernoff's inequality we have
    $$\Pr \left( d^{\pm }_D(v,V_{j+1})<(1-\lambda/2)\frac {d^{\pm }_D(v)m}{n}\right)\leq
    e^{-\frac{\lambda ^2 d^{\pm}_{\D}(v) m}{6n}} \leq
    e^{-\frac {\lambda^2}{12}mp}=o\left(1/n^3\right).$$
    Therefore, with probability $1-o(1/n)$ we have that for every $j\in[\ell-1]$ and $v\in V_j$ we have $d^{\pm}_D(v,V_{j+1})\geq (1-\lambda/2)mp$. We now use this to estimate $d^{\pm}_{F_j}(v)$ where $v\in V_j$. As in $F$ every edge appears independently with probability $p_{in}$, by Chernoff's inequality we have
    $$\Pr\left(d^{\pm}_{F}(v,V_{j+1})<(1-2\lambda)mp p_{in}\right)
    \leq e^{-\frac {\lambda ^2 mp\cdot p_{in}}{6}} =o(1/n^2).$$
    Thus with probability $1-o(1/n)$ we have that $d^{\pm}_{F}(v,V_{j+1})\geq(1-2\lambda)mp\cdot p_{in}$, i.e. (\emph{iii}) holds. \\

Using (\emph{i}), (\emph{ii}) and (\emph{iii}) we can now complete the proof of the lemma.
It suffices to show that $F_j$ contains an $r$-regular subgraph, where $r = (1-4 \lambda )mp\cdot p_{in}$. To see this, by the Gale-Ryser theorem, it suffices to show that for all $X \subset V_{j}$ and $Y \subset V_{j+1}$ we have
\begin{equation}
\label{equation: Gale-Ryser condition} e_{F }(X,Y) \geq r(|X| + |Y| - m).
\end{equation}
Suppose that $|X| = x$ and $|Y| = y$. It clearly suffices to work with the case when $x + y \geq m$. First note that if $x, y \geq k$ then by (\emph{i}) we have
$$
e_{F}(X,Y) \geq (1 - 2\lambda )xyp\cdot p_{in} \geq (1-2\lambda )m(x+y - m)p \cdot p_{in}
> r(x + y - m)$$
The second last inequality here holds since $(m-x)(m-y) \geq 0$. It remains to prove that (\ref{equation: Gale-Ryser condition}) holds for $X, Y$ satisfying $x + y \geq m$ with either $x \leq k$ or $y\leq k$. We will prove this for $x \leq k$, as the other case is identical. Since $x+y \geq m$, we have $y \geq m - x $. But then $ |Y^c| \leq |X| \leq k$ and
\begin{align*}
    e_{F}(X,Y) = e_{F}(X,V_{j+1}) - e_{F}(X,Y^c)
    & \geq (1- 2\lambda )xmp\cdot p_{in}- (|X|+ |Y^c|)\log ^{2.05}n\\
    & \geq x(1-2\lambda )mp\cdot p_{in} - 2x\log ^{2.05}n\\
    & = x(1-4 \lambda )mp\cdot p_{in} + x(2\lambda mp\cdot p_{in} - 2 \log ^{2.05} n)\\
    & \geq x(1-4 \lambda )mp \cdot p_{in} \geq  r(x + y - m).
\end{align*}
The first inequality here holds by (\emph{ii}) and (\emph{iii}) and the third inequality holds since $\lambda mp \cdot p_{in}=\omega(log ^{2.05} n)$ (note that this is true provided $p$ is a sufficiently large power of $\log n$).
\end{proof}

\end{document}